\documentclass{amsart}
\usepackage{latexsym,amssymb,amsmath,amsthm,amscd,graphicx,ascmac,}
\usepackage{longtable}
\usepackage{enumerate}

\numberwithin{equation}{section}

\theoremstyle{plain}
\newtheorem{thm}{Theorem}[section]
\newtheorem*{thm*}{Theorem}

\newtheorem{lem}[thm]{Lemma}
\newtheorem{prp}[thm]{Proposition}

\theoremstyle{definition}
\newtheorem{defn}[thm]{Definition}

\theoremstyle{remark}
\newtheorem{rem}[thm]{Remark}

\newcommand{\cA}{{\mathcal A}}

\newcommand{\cD}{{\mathcal D}}

\newcommand{\cQ}{{\mathcal Q}}

\newcommand{\cS}{{\mathcal S}}

\newcommand{\N}{{\mathbb N}}

\newcommand{\R}{{\mathbb R}}

\newcommand{\Z}{{\mathbb Z}}

\def\al{\alpha}
\def\bt{\beta}
\def\gm{\gamma}

\def\Dl{\Delta}
\def\eps{\varepsilon}
\def\vphi{\varphi}

\def\lm{\lambda}

\def\sg{\sigma}

\def\tht{\theta}

\def\0{\emptyset}
\def\1{{\bf 1}}
\def\6{\partial}
\def\8{\infty}

\def\lt{\left}
\def\rt{\right}

\def\ds{\displaystyle}

\newcommand{\iii}[1]{{\left\vert\kern-0.25ex\left\vert\kern-0.25ex\left\vert #1 
    \right\vert\kern-0.25ex\right\vert\kern-0.25ex\right\vert}}

\def\Xint#1{\mathchoice
 {\XXint\displaystyle\textstyle{#1}}%
 {\XXint\textstyle\scriptstyle{#1}}%
 {\XXint\scriptstyle\scriptscriptstyle{#1}}%
 {\XXint\scriptscriptstyle\scriptscriptstyle{#1}}%
 \!\int}
 \def\XXint#1#2#3{{\setbox0=\hbox{$#1{#2#3}{\int}$}
 \vcenter{\hbox{$#2#3$}}\kern-.5\wd0}}

\def\fint{\Xint-}

\begin{document}

\title{Multilinear embedding theorem for fractional sparse operators}

\author[N.~Hatano]{Naoya Hatano}
\address{
Graduate School of Information Science and Technology, The University of Osaka, 1-5, Yamadaoka, Suita-shi, Osaka 565-0871, Japan
}
\email{n.hatano.chuo@gmail.com}

\author[R.~Kawasumi]{Ryota Kawasumi}
\address{
Center for Mathematics and Data Science, Gunma University, 
4-2 Aramaki-machi, Maebashi City, Gunma 371-8510, Japan 
}
\email{r-kawasumi@gunma-u.ac.jp}

\author[H.~Saito]{Hiroki Saito}
\address{
College of Science and Technology, Nihon University,
Narashinodai 7-24-1, Funabashi City, Chiba, 274-8501, Japan
}
\email{saitou.hiroki@nihon-u.ac.jp}

\author[H.~Tanaka]{Hitoshi Tanaka}
\address{
Research and Support Center on Higher Education for the hearing and Visually Impaired, 
National University Corporation Tsukuba University of Technology,
Kasuga 4-12-7, Tsukuba City, Ibaraki, 305-8521 Japan
}
\email{htanaka@k.tsukuba-tech.ac.jp}

\thanks{
The first named author is financially supported by 
the Grant-in-Aid for JSPS Fellows (No. 25KJ0222).
The third named author is supported by 
Grant-in-Aid for Scientific Research (C) (23K03171), 
the Japan Society for the Promotion of Science. 
The forth named author is supported by 
Grant-in-Aid for Scientific Research (C) (15K04918 and 19K03510), 
the Japan Society for the Promotion of Science.
}

\subjclass[2010]{Primary 42B25; Secondary 35J10, 47A55, 42B35.}

\keywords{
Bessel potentials;
dyadic cubes;
infinitesimal relative bounds;
Morrey spaces;
multilinear embedding theorem;
sparse operators;
Schr\"{o}dinger operators;
trace inequality.
}

\date{}

\begin{abstract}
We show some simple sufficient conditions for which 
the multilinear embedding theorem holds for fractional sparse operators.
By verifying these conditions, 
we establish the theorem for power weights.
We also provide Morrey-type sufficient conditions for which
the $L^p \to L^q$, $1<p,q<\8$, 
infinitesimal relative bounds hold 
for Schr\"{o}dinger operators of the form 
$(-\Dl)^{\al/2}+v$.
\end{abstract}

\maketitle

\section{Introduction}\label{sec1}
Sparse operators play an important role 
in harmonic analysis 
(\cite{Co,Cr,CrMo,Hy2,Hy,Lec,Le,LeOm}).
In \cite{Le0}, 
as a~simple proof of the $A_2$ conjecture,
A.K.~Lerner established 
sparse domination for the singular integral operators.
Probably, one of his 
significant contributions is that 
he clearly separated 
the analysis of singular integral operators 
(that is, sparse domination for the singular integral operators) 
and the weighted estimates of sparse operators.
Once an operator is controlled by a~sparse operator,
we can focus on the properties of the weights 
to prove the weighted estimates for a~sparse operator.
This makes it possible to treat many operators
in a simple and unified way.

The purpose of this paper is to study 
the weighted multilinear embedding inequality:
\begin{equation}\label{1.1}
\sum_{S\in\cS}
K(S)\prod_{i=1}^n
\int_{S}f_i\,{\rm d}\sg_i
\lesssim A_0
\prod_{i=1}^n
\|f_i\|_{L^{p_i}(\sg_i)},
\end{equation}
where $n\ge 2$,
$\sg_1,\ldots,\sg_n$ 
are weights on $\R^d$,
$\cS$ is a~sparse family,
$K:\cS\to[0,\8)$ 
is a~map, 
$1<p_1,\ldots,p_n<\8$ 
are appropriate exponents 
and 
$f_1,\ldots,f_n\in L_{{\rm loc}}^1({\rm d}x)$
are the nonnegative functions.
More precisely, 
we study some simple sufficient conditions 
for which $A_0<\8$.
By a~duality argument, 
one knows that 
the weighted multilinear embedding inequality \eqref{1.1} 
is equivalent to
the weighted multilinear norm inequality:
\[
\|\cA_{\cS,K}[f_1\sg_1,\ldots,f_{n-1}\sg_{n-1}]\|_{L^{p_n'}(\sg_n)}
\lesssim A_0
\prod_{i=1}^{n-1}
\|f_i\|_{L^{p_i}(\sg_i)},
\]
where
$p_{n}'=p_n/(p_n-1)$,
and $\cA_{\cS,K}$ 
is a weighted multilinear sparse operator 
defined by
\[
\cA_{\cS,K}[f_1\sg_1,\ldots,f_{n-1}\sg_{n-1}](x)
:=
\sum_{S\in\cS}
K(S)\prod_{i=1}^{n-1}
\int_{S}|f_i|\,{\rm d}\sg_i\,\1_{S}(x),
\quad x\in\R^d.
\]
In particular, 
we construct a sparse operator associated with the Bessel potential 
and apply it to establish a simple sufficient condition 
for the infinitesimal relative boundedness 
of certain perturbations of 
the fractional Schr\"{o}dinger operator.
For further details on sparse domination of singular integrals
and Riesz potentials,
see the papers \cite{CrMo,Le} and 
the surveys \cite{Cr,Du,Pe}.

We first summarize the notation used in this paper.
Denote by $\cQ=\cQ(\R^d)$ 
the family of all cubes in $\R^d$ 
with sides parallel to the axes. 
Given a~cube $Q\in\cQ$, 
denote by $c_{Q}$ and $\ell_{Q}$ 
its center and its side length of $Q$,
respectively. 
$|Q|$ stands for the volume of $Q$.
Denote by $\cD$ the family of all dyadic cubes 
\[
\cD
:=
\{2^{-k}(m+[0,1)^d):\,
k\in\Z,\,m\in\Z^d\}.
\]
Let $0<\eta<1$.
We say that a family 
$\cS\subset\cD$ is $\eta$-sparse 
if for every $S\in\cS$, 
there exists a measurable set 
$E_{\cS}(S)\subset S$ such that 
$|E_{\cS}(S)|\ge\eta|S|$, 
and the sets 
$\{E_{\cS}(S):\,S\in\cS\}$ 
are pairwise disjoint. 
Given $1<p<\8$, 
$p'=\frac{p}{p-1}$ 
denotes the conjugate exponent number of $p$. 
Recall that a weight 
(that is, a nonnegative locally integrable function) 
$w$ satisfies the $A_p$, $1<p<\8$, condition if
\[
[w]_{A_p}
:=
\sup_{Q\in\cQ}
\lt(\frac1{|Q|}\int_{Q}w\,{\rm d}x\rt)
\lt(\frac1{|Q|}\int_{Q}w^{-\frac1{p-1}}\,{\rm d}x\rt)^{p-1}<\8,
\]
and 
\[
[w]_{A_{\8}}
:=
\lim_{p\to\8}[w]_{A_p}<\8.
\]

For the theory of weighted norm inequalities,
we refer the reader to the classical text \cite{GR} as well as \cite{Gr}.
Regarding the $n$-linear embedding theorem for dyadic positive operators 
and related topics of dyadic analysis,
see our recent book \cite{ST}.

The letter $C$ will be used for constants that may change from one occurrence to another.
Constants with subscripts, such as $C_1$, $C_2$, do not change in different occurrences.
By $A\approx B$ we mean that 
$c^{-1}B\le A\le cB$ 
with some positive finite constant $c$ independent of appropriate quantities. 
We write $X\lesssim Y$, $Y\gtrsim X$ 
if there is a independent constant $c$ such that $X \le cY$. 

\subsection{Theorems}\label{ssec1.1}
We will state our main results of this paper.

\begin{thm}\label{thm2.1}
Let $\sg_1,\ldots,\sg_n$ 
be weights on $\R^d$ and let 
$K:\cD\to[0,\8)$ be a~map. 
Let $\cS\subset\cD$ be a~$\eta$-sparse family.
Let $1<p_1,\ldots,p_n<\8$ 
with $1/p_1+\cdots+1/p_n\ge 1$.
Then, for the nonnegative functions 
$f_1,\ldots,f_n\in L_{{\rm loc}}^1({\rm d}x)$, 
the multilinear embedding inequality
\[
\sum_{S\in\cS}
K(S)\prod_{i=1}^n
\int_{S}f_i\,{\rm d}\sg_i
\lesssim A_0
\prod_{i=1}^n
\|f_i\|_{L^{p_i}(\sg_i)}
\]
holds if the following assertions are satisfied.

\begin{itemize}
\item[{\rm(a)}]
If we assume 
$\sg_1,\ldots,\sg_n\in A_{\8}$,
\[
A_0
=
\sup_{S\in\cS}
K(S)|S|^{n-1/p_1-\cdots-1/p_n}
\prod_{i=1}^n
\lt(\frac{\sg_i(S)}{|S|}\rt)^{1/p_i'}
<\8.
\]
\item[{\rm(b)}]
For $\tht>1$,
\[
A_0
=
\sup_{S\in\cS}
K(S)|S|^{n-1/p_1-\cdots-1/p_n}
\prod_{i=1}^n
\lt(\frac{\sg_i^{\tht}(S)}{|S|}\rt)^{\frac1{\tht p_i'}}
<\8.
\]
\end{itemize}
\end{thm}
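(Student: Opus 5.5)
The plan is to use the hypothesis on $A_0$ to rewrite each summand as a product of $n$ terms, each of which is naturally an $\ell^{p_i}$-summable sequence over $\cS$. Then Hölder's inequality in $\ell$, together with $1/p_1+\cdots+1/p_n\ge 1$, finishes the argument. The key algebraic identity is $\sum_i 1/p_i' = n-\sum_i 1/p_i$. It makes the power of $|S|$ coming from $A_0$ cancel against the normalizations.

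\emph{Case (a).} Write $\int_S f_i\,{\rm d}\sg_i=\sg_i(S)\langle f_i\rangle^{\sg_i}_S$, where $\langle f\rangle^{\sg}_S=\sg(S)^{-1}\int_S f\,{\rm d}\sg$. The hypothesis gives
\[
K(S)\le A_0|S|^{1/p_1+\cdots+1/p_n-n}\prod_{i=1}^n\Big(\frac{|S|}{\sg_i(S)}\Big)^{1/p_i'} .
\]
By the identity above, $K(S)\prod_i\sg_i(S)\le A_0\prod_i\sg_i(S)^{1/p_i}$. Hence the left-hand side of the theorem is at most
\[
A_0\sum_{S\in\cS}\prod_{i=1}^n a_i(S),\qquad a_i(S):=\sg_i(S)^{1/p_i}\langle f_i\rangle^{\sg_i}_S .
\]
Since $\sum 1/p_i\ge1$, we can choose $r_i\ge p_i$ with $\sum 1/r_i=1$. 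Hölder's inequality and the embedding $\ell^{p_i}\subset\ell^{r_i}$ then reduce everything to showing
\[
\sum_{S\in\cS}\sg_i(S)\big(\langle f_i\rangle_S^{\sg_i}\big)^{p_i}\lesssim\|f_i\|^{p_i}_{L^{p_i}(\sg_i)} .
\]
This is where $A_\8$ enters. Since $|E_\cS(S)|\ge\eta|S|$, the $A_\8$ property gives $\sg_i(E_\cS(S))\ge c\,\sg_i(S)$. Therefore the sum is bounded by
\[
c^{-1}\sum_{S\in\cS}\int_{E_\cS(S)}\big(M^{\cD}_{\sg_i}f_i\big)^{p_i}\,{\rm d}\sg_i\le c^{-1}\|M^{\cD}_{\sg_i}f_i\|_{L^{p_i}(\sg_i)}^{p_i} ,
\]
using that the sets $E_\cS(S)$ are pairwise disjoint. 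Here $M^{\cD}_{\sg}$ is the dyadic maximal operator with respect to $\sg$. It is bounded on $L^{p}(\sg)$ with norm at most $p'$ for an arbitrary measure $\sg$, by Doob's inequality. This is the dyadic Carleson embedding theorem.

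\emph{Case (b).} Here no $A_\8$ assumption is available, so I would move the weight into the function. Put $g_i=f_i\sg_i^{1/p_i}$, so that $\|g_i\|_{L^{p_i}({\rm d}x)}=\|f_i\|_{L^{p_i}(\sg_i)}$. Write $f_i\sg_i=g_i\,\sg_i^{1/p_i'}$ and apply Hölder with exponents $t_i$ and $\tht p_i'$, where $1/t_i=1-1/(\tht p_i')$. Since $\tht>1$, we get $t_i<p_i$. This gives
\[
\int_S f_i\,{\rm d}\sg_i\le |S|^{1/t_i}\Big(\frac{1}{|S|}\int_S g_i^{t_i}\,{\rm d}x\Big)^{1/t_i}\sg_i^{\tht}(S)^{1/(\tht p_i')} .
\]
Combining this with the hypothesis in (b) and the same exponent identity, all the powers of $|S|$ collapse. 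We obtain
\[
K(S)\prod_i\int_S f_i\,{\rm d}\sg_i\le A_0\prod_i|S|^{1/p_i}\langle g_i^{t_i}\rangle_S^{1/t_i} .
\]
Hölder in $\ell$ as before reduces matters to bounding $\sum_S|S|\langle g_i^{t_i}\rangle_S^{p_i/t_i}$. Replacing $|S|$ by $\eta^{-1}|E_\cS(S)|$ and using disjointness, this is at most $\eta^{-1}\|(M g_i^{t_i})^{1/t_i}\|_{L^{p_i}}^{p_i}$. Since $p_i/t_i>1$, the Hardy--Littlewood maximal theorem bounds this by $C\|g_i\|_{L^{p_i}}^{p_i}$.

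The main point needing care is the exponent bookkeeping: one must check that $t_i<p_i$ strictly, which is exactly where $\tht>1$ is needed. In (a), the only nontrivial input is the standard fact that $A_\8$ upgrades $|E|\ge\eta|S|$ to $\sg(E)\gtrsim\sg(S)$. Everything else is routine Hölder.
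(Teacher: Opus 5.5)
Your proposal is correct and follows essentially the paper's own proof. In (a) the only difference is that you use the $A_\infty$ comparison $\sigma_i(S)\lesssim\sigma_i(E_{\cS}(S))$ at the Carleson-sum stage. The paper instead applies it to bound the intermediate constant $\sup_{S}K(S)\prod_i\sigma_i(S)\,\sigma_i(E_{\cS}(S))^{-1/p_i}$. In (b) your choices $g_i=f_i\sigma_i^{1/p_i}$, H\"older with exponents $t_i$ and $\theta p_i'$, and the maximal bound on $L^{p_i/t_i}$ are exactly the paper's inequality (2.3) with $q_i=t_i$.
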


\begin{thm}\label{thm2.2}
Let $\sg_1,\ldots,\sg_n$ 
be weights on $\R^d$ and let 
$K:\cD\to[0,\8)$ be a~map. 
Let $\cS\subset\cD$ be a~$\eta$-sparse family.
Let $1<p_1,\ldots,p_n<\8$ 
with $1/p_1+\cdots+1/p_n<1$.
Then, for the nonnegative functions 
$f_1,\ldots,f_n\in L_{{\rm loc}}^1({\rm d}x)$, 
the multilinear embedding inequality
\[
\sum_{S\in\cS}
K(S)\prod_{i=1}^n
\int_{S}f_i\,{\rm d}\sg_i
\lesssim A_0
\prod_{i=1}^n
\|f_i\|_{L^{p_i}(\sg_i)}
\]
holds if the following assertions are satisfied.

\begin{itemize}
\item[{\rm(a)}]
If we assume 
$\sg_1,\ldots,\sg_n\in A_{\8}$,
for $1/r+1/p_1+\cdots+1/p_n=1$,
\[
A_0
=
\lt[
\sum_{S\in\cS}
\lt(
K(S)|S|^{n-1}\prod_{i=1}^n
\lt(\frac{\sg_i(S)}{|S|}\rt)^{1/p_i'}
\rt)^r
|E_{\cS}(S)|
\rt]^{1/r}
<\8.
\]
\item[{\rm(b)}]
For $1/r+1/p_1+\cdots+1/p_n=1$ 
and $\tht>1$,
\[
A_0
=
\lt[
\sum_{S\in\cS}
\lt(
K(S)|S|^{n-1}\prod_{i=1}^n
\lt(\frac{\sg_i^{\tht}(S)}{|S|}\rt)^{\frac1{\tht p_i'}}
\rt)^r
|E_{\cS}(S)|
\rt]^{1/r}
<\8.
\]
\end{itemize}
\end{thm}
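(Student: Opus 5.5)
Theorem~\ref{thm2.2} concerns the case $1/p_1+\cdots+1/p_n<1$. We rewrite each term of the sum with weighted averages and use sparseness to turn the sum into an integral. Write
\[
\langle f_i\rangle^{\sg_i}_S:=\frac{1}{\sg_i(S)}\int_S f_i\,{\rm d}\sg_i .
\]
Then
\[
K(S)\prod_{i=1}^n\int_S f_i\,{\rm d}\sg_i
=K(S)|S|^{n}\prod_{i=1}^n\frac{\sg_i(S)}{|S|}\,\langle f_i\rangle^{\sg_i}_S .
\]
We split $\sg_i(S)/|S|=(\sg_i(S)/|S|)^{1/p_i'}(\sg_i(S)/|S|)^{1/p_i}$. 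The first factors, together with $K(S)|S|^{n-1}$, form exactly the quantity in $A_0$. The leftover is
\[
|S|\prod_{i}\bigl(\sg_i(S)/|S|\bigr)^{1/p_i}\langle f_i\rangle^{\sg_i}_S .
\]
Using $|S|\le\eta^{-1}|E_{\cS}(S)|$, the sum is bounded by
\[
\eta^{-1}\sum_{S}a_S\,b_S\,|E_{\cS}(S)|,\qquad
a_S=K(S)|S|^{n-1}\prod_i\bigl(\sg_i(S)/|S|\bigr)^{1/p_i'},\qquad
b_S=\prod_i\bigl(\sg_i(S)/|S|\bigr)^{1/p_i}\langle f_i\rangle^{\sg_i}_S .
\]

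\textbf{Case (a).} Apply H\"older's inequality to the sum with respect to the measure $|E_{\cS}(S)|$, with exponents $r$ and $p_1,\ldots,p_n$; these are admissible since $1/r+\sum_i 1/p_i=1$. This gives the bound $A_0\prod_i\bigl(\sum_S (\sg_i(S)/|S|)(\langle f_i\rangle^{\sg_i}_S)^{p_i}|E_{\cS}(S)|\bigr)^{1/p_i}$. The key step, and the main obstacle, is the weighted Carleson embedding
\[
\sum_S \frac{\sg_i(S)}{|S|}\,|E_{\cS}(S)|\,(\langle f\rangle^{\sg_i}_S)^{p}\lesssim\|f\|^p_{L^p(\sg_i)} .
\]
By the dyadic Carleson embedding theorem (weighted maximal function $M^{\sg_i}_{\cD}$ bounded on $L^p(\sg_i)$), it suffices to check the Carleson condition
\[
\sum_{S\subset R}\frac{\sg_i(S)}{|S|}|E_{\cS}(S)|\lesssim\sg_i(R).
\]
Here the $A_\8$ hypothesis is used: $\sg_i\in A_\8$ gives $|E|\ge\eta|S|\Rightarrow\sg_i(E)\gtrsim\sg_i(S)$. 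Hence $\sg_i(S)\lesssim\sg_i(E_{\cS}(S))$, and disjointness of the sets $E_{\cS}(S)$ yields the Carleson bound. Although $\sg_i(S)|E_{\cS}(S)|/|S|\le\sg_i(S)$ holds trivially, we still need $\sum_{S\subset R}\sg_i(S)\lesssim\sg_i(R)$, and that is precisely where $A_\8$ enters.

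\textbf{Case (b).} Instead of $A_\8$, use H\"older within $S$:
\[
\int_S f_i\,{\rm d}\sg_i\le |S|\Bigl(\frac{\sg_i^{\tht}(S)}{|S|}\Bigr)^{1/\tht}\Bigl(\frac1{|S|}\int_S f_i^{\tht'}\,{\rm d}x\Bigr)^{1/\tht'} .
\]
A cleaner route is to write $f_i\sg_i=(f_i\sg_i^{1/p_i})\,\sg_i^{1/p_i'}$. H\"older with three exponents on $S$ gives
\[
\int_S f_i\sg_i\le |S|\,\bigl\langle (f_i\sg_i^{1/p_i})^{s_i}\bigr\rangle_S^{1/s_i}\,\bigl\langle\sg_i^{\tht}\bigr\rangle_S^{1/(\tht p_i')},
\]
where $1/s_i+1/(\tht p_i')=1$. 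Then $s_i<p_i$, because $1/(\tht p_i')<1/p_i'$. The weight factors again produce $a_S$ (now with $\sg_i^{\tht}$). The remaining factor is the unweighted average of $g_i:=f_i\sg_i^{1/p_i}\in L^{p_i}({\rm d}x)$. After H\"older over $S$ as in case (a), we must bound $\sum_S\langle g_i^{s_i}\rangle_S^{p_i/s_i}|E_{\cS}(S)|\le\int (M g_i^{s_i})^{p_i/s_i}\,{\rm d}x$. This is bounded by $\|g_i\|_{L^{p_i}}^{p_i}=\|f_i\|_{L^{p_i}(\sg_i)}^{p_i}$ since $p_i/s_i>1$. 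No $A_\8$ assumption is needed here.

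Theorem~\ref{thm2.1}, the case $\sum_i 1/p_i\ge1$, goes along the same lines. We pull out $\sup a_S$, now with the exponent $n-\sum_i1/p_i$, so that the leftover is $\prod_i(\ldots)$ and carries total power $|S|^{\sum_i 1/p_i}\ge|S|$. We then use the $\ell^{p}\subset\ell^{q}$ embedding to reduce to the same embedding sums with the exponents $p_i$. Explicitly, we bound $\sum_S\prod_i c_{i,S}^{1/p_i}\le\prod_i(\sum_S c_{i,S})^{1/p_i}$, which is valid when $\sum_i 1/p_i\ge1$. This again reduces to the Carleson or maximal estimates above.
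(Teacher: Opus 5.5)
Your proposal is correct and is essentially the paper's argument. Part (b) follows the paper's route exactly: the splitting $f_i\sg_i=(f_i\sg_i^{1/p_i})\,\sg_i^{1/p_i'}$ with $s_i<p_i$ (the paper's $q_i$), then H\"older over $S\in\cS$ with exponents $r,p_1,\ldots,p_n$, then the unweighted maximal function. In part (a) the difference is only bookkeeping: the paper puts the weights $\sg_i(E_{\cS}(S))^{1/p_i}$ into the H\"older step and then uses the $A_\8$ comparison $\sg_i(S)\lesssim\sg_i(E_{\cS}(S))$ to simplify the constant, whereas you use that same comparison to check the Carleson condition, which amounts to the paper's bound $\sum_S(\langle f_i\rangle^{\sg_i}_S)^{p_i}\sg_i(E_{\cS}(S))\le\|M_{\sg_i}f_i\|_{L^{p_i}(\sg_i)}^{p_i}$.
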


We give several applications of
Theorems \ref{thm2.1} and \ref{thm2.2}.
First, by applying part (a) of 
Theorems \ref{thm2.1} and \ref{thm2.2}
to the power weights,
we obtain the following results.

\begin{thm}\label{thm3}
Let 
$K(S)=\ell_{S}^{\al-(n-1)d}$, 
$S\in\cS$, with $0<\al<(n-1)d$.
\begin{itemize}
\item[{\rm(a)}]
Suppose that $1/p_1+\cdots+1/p_n\ge 1$.
Let $\sg_i=|x|^{\bt_i}$ with 
$\bt_i>-d$, $i=1,\ldots,n$.
Then, the constant $A_0$ in 
Theorem \ref{thm2.1} part (a) 
is finite provided that
\[
\al
+
d\lt(1-\frac{1}{p_1}-\cdots-\frac{1}{p_n}\rt)
+
\frac{\bt_1}{p_1'}+\cdots+\frac{\bt_n}{p_n'}
=0.
\]
\item[{\rm(b)}]
Suppose that $1/p_1+\cdots+1/p_n< 1$.
Let 
$\sg_i=|x|^{\bt_i}\max(|x|,\,1)^{p_i'\rho/n}$ 
with $\rho<-d/r$ and 
$\bt_i+p_i'\rho/n>-d$, 
$i=1,\ldots,n$.
Then, the constant $A_0$ in 
Theorem \ref{thm2.2} part (a) 
is finite provided that
\[
\al
+
\frac{\bt_1}{p_1'}+\cdots+\frac{\bt_n}{p_n'}
=0.
\]
\end{itemize}
\end{thm}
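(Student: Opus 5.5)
We must verify that the weights are in $A_\8$ and compute $\sg_i(S)$ for dyadic cubes. The whole proof reduces to the standard estimate for power weights: for a cube $Q$ and $\bt>-d$,
\[
\int_Q |x|^{\bt}\,{\rm d}x \approx |Q|\,\max(|c_Q|,\ell_Q)^{\bt},
\]
and $|x|^{\bt}\in A_\8$ whenever $\bt>-d$. The latter holds because $|x|^{\bt}\in A_p$ for $p$ large, namely for $-d<\bt<d(p-1)$.

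For part (a), plug this estimate into the expression for $A_0$ in Theorem \ref{thm2.1} part (a). Writing $R_S=\max(|c_S|,\ell_S)\ge \ell_S$, we obtain
\[
K(S)|S|^{n-\sum 1/p_i}\prod_i\lt(\frac{\sg_i(S)}{|S|}\rt)^{1/p_i'}
\approx \ell_S^{\al-(n-1)d+d(n-\sum 1/p_i)}\,R_S^{\sum\bt_i/p_i'}.
\]
The exponent of $\ell_S$ simplifies to $\al+d(1-\sum1/p_i)$, which by hypothesis equals $-\sum\bt_i/p_i'$. The expression is therefore $(R_S/\ell_S)^{\gm}$ with $\gm=\sum\bt_i/p_i'=-\al-d(1-\sum 1/p_i)$. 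Since $\al>0$ and $\sum1/p_i\ge1$, we have $\gm\le -\al+d(\sum 1/p_i-1)$. The sign of $\gm$ is not obvious, which is the subtle point. If $\gm\le0$, then $R_S/\ell_S\ge1$ gives boundedness by $1$. If $\gm>0$, boundedness requires $|c_S|\lesssim\ell_S$, which fails for small cubes far from the origin. In that regime I would instead use the per-cube estimate more carefully: for cubes with $|c_S|\gg\ell_S$ the weight is essentially constant on $S$. The multilinear inequality then localizes, and one could use a different argument there, presumably what the authors intend. My plan is to check first whether the hypotheses force $\gm\le0$. This happens iff $\al\ge d(\sum1/p_i-1)$, which, combined with $\bt_i>-d$, I would try to derive (for instance, $\sum\bt_i/p_i'>-d\sum 1/p_i'=-d(n-\sum1/p_i)$). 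Otherwise I would note that the supremum is finite precisely when $\gm\le 0$. I expect the authors' statement implicitly has $\gm\le0$, and this is the main obstacle to address.

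For part (b), the weights $\sg_i=|x|^{\bt_i}\max(|x|,1)^{p_i'\rho/n}$ are in $A_\8$: locally they behave like $|x|^{\bt_i}$ and at infinity like $|x|^{\bt_i+p_i'\rho/n}$, both exponents being $>-d$. A similar estimate gives
\[
\sg_i(S)/|S|\approx R_S^{\bt_i}\max(R_S,1)^{p_i'\rho/n}.
\]
Then $A_0^r$ in Theorem \ref{thm2.2} part (a) becomes
\[
\sum_{S}\Big(\ell_S^{\al}R_S^{\sum\bt_i/p_i'}\max(R_S,1)^{\rho}\Big)^r|E_\cS(S)|.
\]
By the condition $\al=-\sum\bt_i/p_i'$, the first factor equals $(\ell_S/R_S)^{\al}\le1$. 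Since $\max(R_S,1)^{\rho}\lesssim \max(|x|,1)^{\rho}$ for $x\in S$ (when $R_S\approx|c_S|$; in the case $\ell_S\gtrsim|c_S|$ we have $|x|\lesssim R_S$ and $\rho<0$, so the inequality goes the right way), the sum is bounded by $\int\max(|x|,1)^{\rho r}\,{\rm d}x$, using the disjointness of the sets $E_\cS(S)$. This integral is finite exactly because $\rho r<-d$.
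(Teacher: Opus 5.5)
\textbf{Part (a).} The one real problem is in part (a), and it lies in the statement, not in your computation. Your reduction to $(R_S/\ell_S)^{\gamma}$, with $R_S=\max(\ell_S,|c_S|)$ and $\gamma=\sum_i\beta_i/p_i'$, is exactly the paper's. Your suspicion is justified: the hypotheses do not force $\gamma\le 0$. The paper's proof uses $\gamma\le 0$ silently in the step $\ell_S^{\alpha+d(1-\sum_i 1/p_i)}\le\max(\ell_S,|c_S|)^{\alpha+d(1-\sum_i 1/p_i)}$, which needs the exponent $-\gamma$ to be nonnegative.

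Here is a counterexample. Take $d=1$, $n=2$, $p_1=p_2=4/3$ (so $p_i'=4$ and $\sum_i 1/p_i=3/2$), $\alpha=1/4$, and $\beta_1=\beta_2=1/2$. Every hypothesis holds, and the balance condition reads $\frac14-\frac12+\frac18+\frac18=0$. Yet $\gamma=1/4$. Consider the sparse family $\mathcal{S}=\{[1+2^{-k},1+2^{1-k}):k\ge 1\}$, which consists of pairwise disjoint dyadic intervals with $E_{\mathcal{S}}(S)=S$. For it, the quantity under the supremum is $\approx 2^{k/4}\to\infty$.

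So both of your fallback plans are dead ends. The bound $\gamma>-d(n-\sum_i 1/p_i)$ is a lower bound and cannot give $\gamma\le 0$. No localization argument can rescue the finiteness of a supremum of explicit numbers. In fact, testing \eqref{1.1} with $f_i=\mathbf{1}_S$ for a single such $S$ gives a ratio $\approx \ell_S^{-\gamma}$, so even the embedding inequality itself fails.

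The correct resolution is the one you mention last, but it should be stated as a correction to the theorem. Part (a) needs the extra hypothesis $\sum_i\beta_i/p_i'\le 0$, i.e.\ $\alpha\ge d(\sum_i 1/p_i-1)$. For $n=2$, in the notation $\gamma_i=-\beta_i/p_i'$ of the Stein--Weiss remark, this is the classical condition $\gamma_1+\gamma_2\ge 0$. Under this hypothesis your argument is complete; without it, $A_0=\infty$ for suitable sparse families.

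\textbf{Part (b).} This part is correct and follows the paper's steps:
first bound $\sigma_i(S)/|S|\lesssim R_S^{\beta_i}\max(R_S,1)^{p_i'\rho/n}$; then use $\alpha=-\sum_i\beta_i/p_i'$ to get $(\ell_S/R_S)^{\alpha}\le 1$; finally sum using the disjointness of the sets $E_{\mathcal{S}}(S)$.
Only the upper bound on $\sigma_i(S)/|S|$ is needed. It follows from $\sigma_i\le\min(|x|^{\beta_i},|x|^{\beta_i+p_i'\rho/n})$, valid because $\rho<0$, together with Lemma~\ref{lem3.1}.

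Your final step is slightly cleaner than the paper's. Since $|x|\le |c_S|+\frac{\sqrt d}{2}\ell_S\lesssim R_S$ for every $x\in S$ and every cube $S$, you get $\max(R_S,1)^{r\rho}|E_{\mathcal{S}}(S)|\lesssim\int_{E_{\mathcal{S}}(S)}\max(|x|,1)^{r\rho}\,dx$ with no case distinction. The paper instead treats the cubes with $0\in\overline{S}$ separately, counting them scale by scale.
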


This result extends 
the classical Stein-Weiss inequality 
\cite{StWe}, 
since 
Theorem \ref{thm3} part (a) 
reduces to the classical case 
when $n=2$.
Further details will be described in Section 3.

As a second application of Theorems \ref{thm2.1} and \ref{thm2.2},
we derive a simple sufficient condition of 
the $L^p \to L^q$ infinitesimal relative bounds 
for Schr\"{o}dinger operators of the form
$(-\Dl)^{\al/2}+v$
(Theorems \ref{thm4.1} and \ref{thm4.2}).

\begin{thm}\label{thm4.1}
Let $0<\al<d$, $1<p\le q<\8$ 
and $v$ be a~weight on $\R^d$. Then
for any $\eps>0$, there exists 
$C(\eps)>0$ such that, 
for any $\vphi\in C_c^{\8}(\R^d)$,
\[
\|\vphi\|_{L^q(v)}
\lesssim
\eps
\|(-\Dl)^{\al/2}\vphi\|_{L^p({\rm d}x)}
+
C(\eps)
\|\vphi\|_{L^p({\rm d}x)}
\]
holds if the following assertions are satisfied.

\begin{itemize}
\item[{\rm(a)}]
If we assume $v\in A_{\8}$,
\[
\max\lt(\begin{array}{l}
\ds
\sup_{Q\in\cQ(\R^d):\,\ell_{Q}\le 1}
|Q|^{\al/n-1/p}v(Q)^{1/q},\\
\ds
\sup_{Q\in\cQ(\R^d):\,\ell_{Q}>1}
\frac{v(Q)^{1/q}}{|Q|^{1/p}}
\end{array}\rt)<\8
\]
and
\[
\lim_{\lm\to\8}
\sup_{Q\in\cQ(\R^d):\,
\ell_{Q}\le 1/\lm}
|Q|^{\al/n-1/p}v(Q)^{1/q}
=0.
\]
\item[{\rm(b)}]
For $\tht>1$,
\[
\max\lt(\begin{array}{l}
\ds
\sup_{Q\in\cQ(\R^d):\,\ell_{Q}\le 1}
|Q|^{\al/n+1/q-1/p}
\lt(\frac{v^{\tht}(Q)}{|Q|}\rt)^{\frac1{\tht q}},\\
\ds
\sup_{Q\in\cQ(\R^d):\,\ell_{Q}>1}
|Q|^{1/q-1/p}
\lt(\frac{v^{\tht}(Q)}{|Q|}\rt)^{\frac1{\tht q}}
\end{array}\rt)<\8
\]
and
\[
\lim_{\lm\to\8}
\sup_{Q\in\cQ(\R^d):\,
\ell_{Q}\le 1/\lm}
|Q|^{\al/n+1/q-1/p}
\lt(\frac{v^{\tht}(Q)}{|Q|}\rt)^{\frac1{\tht q}}
=0.
\]
\end{itemize}
\end{thm}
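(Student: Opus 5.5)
The plan is to realize $\vphi$ as a potential of $(-\Dl)^{\al/2}\vphi+\lm^{\al}\vphi$, dominate that potential by a sparse operator, and then apply Theorem \ref{thm2.1} with $n=2$. The constant will turn out to tend to $0$ as $\lm\to\8$.

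\emph{Representation and kernel bounds.} For $\lm\ge1$ let $G_{\lm}$ be the kernel with Fourier transform $(\lm^{\al}+|\xi|^{\al})^{-1}$. Then for every $\vphi\in C_c^{\8}(\R^d)$,
\[
\vphi=G_{\lm}*g_{\lm},\qquad g_{\lm}:=(-\Dl)^{\al/2}\vphi+\lm^{\al}\vphi ,
\]
so $\|g_{\lm}\|_{L^p({\rm d}x)}\le\|(-\Dl)^{\al/2}\vphi\|_{L^p({\rm d}x)}+\lm^{\al}\|\vphi\|_{L^p({\rm d}x)}$.

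By scaling, $G_{\lm}(x)=\lm^{d-\al}G_1(\lm x)$. From the standard Bessel-type asymptotics of $G_1$ I would derive
\[
|G_{\lm}(x)|\lesssim |x|^{\al-d}\quad(|x|\le1/\lm),\qquad
|G_{\lm}(x)|\lesssim \lm^{-2\al}|x|^{-d-\al}\quad(|x|>1/\lm).
\]
The second bound comes from the $|\xi|^{\al}$ singularity of the symbol at the origin. Only $|G_{\lm}|$ is needed, so positivity of $G_{\lm}$ (which holds only for $\al\le2$) plays no role.

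\emph{Sparse domination.} The bound on $|G_{\lm}|$ is radially decreasing and essentially constant on dyadic annuli. Using the three-lattice trick and the usual sparse domination of positive radial kernels, I would obtain
\[
|\vphi(x)|\lesssim\sum_{j=1}^{3^d}\sum_{S\in\cS_j}K_{\lm}(S)\,\frac1{|S|}\int_S|g_{\lm}|\,{\rm d}x\,\1_S(x).
\]
Here each $\cS_j$ is a sparse family in a translate of $\cD$, and
\[
K_{\lm}(S)\approx\ell_S^{\al}\ \ (\ell_S\le1/\lm),\qquad K_{\lm}(S)\approx\lm^{-2\al}\ell_S^{-\al}\ \ (\ell_S>1/\lm).
\]
I expect this step to be the main obstacle: the two-regime kernel must be handled carefully so that the constants stay uniform in $\lm$. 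Theorem \ref{thm2.1} is stated for $\cD$, but it applies equally to translated grids.

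\emph{Applying Theorem \ref{thm2.1}.} Test the pointwise bound against $f_2\in L^{q'}(v)$, $f_2\ge0$. Apply Theorem \ref{thm2.1} with $n=2$, $\sg_1={\rm d}x$, $p_1=p$, $f_1=|g_{\lm}|$, $\sg_2=v$, $p_2=q'$, and kernel $K_{\lm}(S)/|S|$. The hypothesis $1/p+1/q'\ge1$ is exactly $p\le q$. In case (a) we have $\sg_1,\sg_2\in A_\8$, and a direct computation gives
\[
A_0(\lm)=\sup_S K_{\lm}(S)\,|S|^{-1/p}v(S)^{1/q}.
\]
In case (b) the same computation gives $A_0(\lm)=\sup_S K_{\lm}(S)\,|S|^{1/q-1/p}\,(v^{\tht}(S)/|S|)^{1/(\tht q)}$. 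Duality then yields
\[
\|\vphi\|_{L^q(v)}\lesssim A_0(\lm)\bigl(\|(-\Dl)^{\al/2}\vphi\|_{L^p({\rm d}x)}+\lm^{\al}\|\vphi\|_{L^p({\rm d}x)}\bigr).
\]

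\emph{Showing $A_0(\lm)\to0$.} Write $M(Q)$ for $|Q|^{\al/d-1/p}v(Q)^{1/q}$ in case (a), or its $\tht$-analogue in case (b). For $\ell_S\le1/\lm$, where $K_\lm(S)\approx\ell_S^{\al}$, the relevant quantity is $M(S)$. I split the cubes into four ranges:
\begin{itemize}
\item[(i)] $\ell_S\le1/\lm$: the contribution is bounded by $\sup_{\ell_Q\le1/\lm}M(Q)\to0$.
\item[(ii)] $1/\lm<\ell_S\le\lm^{-1/2}$: here $K_{\lm}(S)\ell_S^{-\al}=(\lm\ell_S)^{-2\al}\le1$, so the contribution is at most $\sup_{\ell_Q\le\lm^{-1/2}}M(Q)\to0$.
\item[(iii)] $\lm^{-1/2}<\ell_S\le1$: the factor satisfies $(\lm\ell_S)^{-2\al}\le\lm^{-\al}$, and $M$ is bounded on cubes with $\ell_Q\le1$.
\item[(iv)] $\ell_S>1$: the contribution is at most $\lm^{-2\al}\sup_{\ell_Q>1}v(Q)^{1/q}|Q|^{-1/p}$, which is finite by hypothesis.
\end{itemize}
Dyadic cubes are cubes, so the suprema over $\cQ$ control those over $\cS_j$.

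\emph{Conclusion.} Since $A_0(\lm)\to0$, given $\eps>0$ choose $\lm$ so large that $C A_0(\lm)\le\eps$. The estimate then holds with $C(\eps)=C A_0(\lm)\lm^{\al}$.
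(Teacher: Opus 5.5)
Your outline is sound, but one claim in the sparse step is unjustified. It is also stronger than what the standard argument produces.

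\emph{The large-cube kernel.} For $\ell_S>1/\lm$ you assert $K_\lm(S)\approx\lm^{-2\al}\ell_S^{-\al}$. In the usual principal-cube argument, the cubes $Q\ni x$ attached to a stopping cube $S$ are controlled by
$\langle|g_\lm|\rangle_S\sum_{Q\subseteq S,\,Q\ni x}\Phi(\ell_Q)|Q|\approx\langle|g_\lm|\rangle_S\int_{|y|\lesssim\ell_S}\Phi(|y|)\,{\rm d}y$,
where $\Phi$ is your radial majorant of $|G_\lm|$.
\begin{itemize}
\item This is comparable to $\Phi(\ell_S)|S|$ only where $\Phi(\ell)\ell^{d}$ grows geometrically, i.e.\ for $\ell\le 1/\lm$.
\item For larger cubes the sum is dominated by the subcubes of side about $1/\lm$. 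What you actually get is $K_\lm(S)\approx\min(\ell_S,1/\lm)^{\al}=\lm^{-\al}\min((\lm\ell_S)^{\al},1)$, i.e.\ $\lm^{-\al}$ on large cubes.
\item This is precisely the Bessel sparse kernel of Lemma~\ref{lem4.2}. Nothing in your sketch delivers the decreasing weight $\ell_S^{-\al}$.
\end{itemize}
Fortunately you do not need it. With the correct kernel, your ranges (ii), (iii), (iv) give respectively $(\lm\ell_S)^{-\al}M(S)\le M(S)$, at most $\lm^{-\al/2}\sup_{\ell_Q\le 1}M(Q)$, and at most $\lm^{-\al}\sup_{\ell_Q>1}v(Q)^{1/q}|Q|^{-1/p}$. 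The $\tht$-analogues hold in case (b). So $A_0(\lm)\to 0$ and your conclusion stands.

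\emph{Comparison with the paper's route.} The paper does not invert $\lm^{\al}I+(-\Dl)^{\al/2}$ directly. It writes $I=J_{\al,\lm}\circ T_m\circ(\lm^{\al}I+(-\Dl)^{\al/2})$, where $T_m$ is bounded on $L^p$ by Mihlin's theorem. The bound is uniform in $\lm$ because the symbol of $T_m$ is a dilate of a fixed one. The paper therefore only needs:
\begin{itemize}
\item the positive, exponentially decaying Bessel kernel with the ready-made sparse bound of Lemma~\ref{lem4.2};
\item Theorem~\ref{thm2.1} with $n=2$;
\item Proposition~\ref{prp4.1}, whose choice of $N_1,N_0$ plays the role of your threshold $\lm^{-1/2}$.
\end{itemize}
Your version avoids the multiplier theorem and the imported lemma. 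The price is that you must prove the kernel bounds for $G_1$ yourself. The tail $|x|^{-d-\al}$ comes from the non-smooth term $|\xi|^{\al}$ of the symbol at the origin, not from any Bessel asymptotics. Also, $G_1$ changes sign when $\al>2$, which your use of $|G_\lm|$ correctly sidesteps. After the kernel correction above, both routes land on the same sparse operator and the same final estimate.
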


\begin{thm}\label{thm4.2}
Let $0<\al<d$, $1<q<p<\8$ 
and $v$ be a~weight on $\R^d$. 
Let $1/r=1/q-1/p$ and 
$\tht_1,\tht_2>1$ with
\[
\frac1q
=
\frac1{\tht_2 p}
+
\frac{\tht_1}r.
\]
Suppose that 
$\|v\|_{L^{\tht_1}({\rm d}x)}^{\tht_1/r}<\8$.
Then for any $\eps>0$, there exists 
$C(\eps)>0$ such that, 
for any $\vphi\in C_c^{\8}(\R^d)$,
\[
\|\vphi\|_{L^q(v)}
\lesssim
\eps
\|(-\Dl)^{\al/2}\vphi\|_{L^p({\rm d}x)}
+
C(\eps)
\|\vphi\|_{L^p({\rm d}x)}
\]
holds if the following assertions are satisfied.

\begin{itemize}
\item[{\rm(a)}]
If we assume $v\in A_{\8}$,
\[
\sup_{Q\in\cQ(\R^d):\,\ell_{Q}\le 1}
\ell_{Q}^{\al}
\lt(\frac{v(Q)}{|Q|}\rt)^{\frac1{\tht_2 p}}
<\8
\]
and
\[
\lim_{\lm\to\8}
\sup_{Q\in\cQ(\R^d):\,
\ell_{Q}\le 1/\lm}
\ell_{Q}^{\al}
\lt(\frac{v(Q)}{|Q|}\rt)^{\frac1{\tht_2 p}}
=0.
\]
\item[{\rm(b)}]
For $1<\tht<\tht_1$,
\[
\sup_{Q\in\cQ(\R^d):\,\ell_{Q}\le 1}
\ell_{Q}^{\al}
\lt(\frac{v^{\tht}(Q)}{|Q|}\rt)^{\frac1{\tht\tht_2 p}}
<\8
\]
and
\[
\lim_{\lm\to\8}
\sup_{Q\in\cQ(\R^d):\,
\ell_{Q}\le 1/\lm}
\ell_{Q}^{\al}
\lt(\frac{v^{\tht}(Q)}{|Q|}\rt)^{\frac1{\tht\tht_2 p}}
=0.
\]
\end{itemize}
\end{thm}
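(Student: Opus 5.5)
Since $q<p$, we expect to use Theorem \ref{thm2.2}, the $1/p_1+\cdots+1/p_n<1$ case, with $n=2$. Write $\vphi=I_\al g$ with $g=(-\Dl)^{\al/2}\vphi$. Split the Riesz kernel into a local part $|x-y|\le 1/\lm$ and a global part $|x-y|>1/\lm$. The global part is bounded pointwise by $\lm^{d-\al}|g|*\1_{B(0,1)}$-type averages, which are controlled by $C(\eps)\|\vphi\|_{L^p}$ after rewriting $g$ in terms of the Bessel potential. More precisely, we write $\vphi=G_\al*((1-\Dl)^{\al/2}\vphi)$ and use $\|(1-\Dl)^{\al/2}\vphi\|_p\lesssim \|(-\Dl)^{\al/2}\vphi\|_p+\|\vphi\|_p$. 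The tail of $G_\al$ decays exponentially, and this is the sparse operator associated with the Bessel potential mentioned in the introduction. The local part is dominated by finitely many dyadic-lattice sparse operators
\[
\sum_{S\in\cS,\ \ell_S\le 1/\lm}\ell_S^{\al-d}\int_S|g|\,{\rm d}x\,\1_S .
\]

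By duality, $\|\cdot\|_{L^q(v)}$ is tested against $h\in L^{q'}(v)$. This gives the bilinear form $\sum_S K(S)\int_S|g|\,{\rm d}x\int_S h\,{\rm d}v$ with $\sg_1={\rm d}x$, $p_1=p$, $\sg_2=v$, $p_2=q'$, and $K(S)=\ell_S^{\al-d}\1_{\ell_S\le1/\lm}$. Here $1/p+1/q'<1$ because $q<p$, and $1/r=1-1/p-1/q'=1/q-1/p$, matching the statement.

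In case (a), Theorem \ref{thm2.2}(a) asks that
\[
\sum_{\ell_S\le 1/\lm}\Bigl(\ell_S^{\al}\Bigl(\frac{v(S)}{|S|}\Bigr)^{1/q}\Bigr)^r|E(S)|
\]
be small as $\lm\to\8$. We split $(v(S)/|S|)^{r/q}=(v(S)/|S|)^{r/(\tht_2p)}(v(S)/|S|)^{\tht_1}$, using $\frac1q=\frac1{\tht_2p}+\frac{\tht_1}r$. The first factor combines with $\ell_S^{\al r}$ to give $\bigl(\sup_{\ell_Q\le1/\lm}\ell_Q^\al(v(Q)/|Q|)^{1/(\tht_2p)}\bigr)^r$, which tends to $0$. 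The remaining sum is $\sum_S (v(S)/|S|)^{\tht_1}|E(S)|\le\int (M v)^{\tht_1}\lesssim\|v\|_{L^{\tht_1}}^{\tht_1}$, using disjointness of the sets $E(S)$ and the maximal theorem ($\tht_1>1$). Hence $A_0\lesssim \sup(\cdots)\,\|v\|_{L^{\tht_1}}^{\tht_1/r}\to0$, and we choose $\lm$ so that this is at most $\eps$. Case (b) is identical with $v^\tht$ and $(v^\tht(S)/|S|)^{1/\tht}$. The last step becomes $\int (M(v^\tht))^{\tht_1/\tht}\lesssim\|v\|_{\tht_1}^{\tht_1}$, which requires $\tht_1/\tht>1$, that is $\tht<\tht_1$.

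The main obstacle is the global part: we must show it is bounded by $C(\lm)\|\vphi\|_{L^p}$ in $L^q(v)$ rather than in $L^p({\rm d}x)$. For this we plan to use the Bessel representation. Its large-scale sparse part with cubes $\ell_S>1/\lm$ has coefficients $\ell_S^{\al-d}e^{-c\ell_S}$, truncated at scale $1/\lm$. We apply Theorem \ref{thm2.2} again. For $1/\lm<\ell_S\le 1$ the coefficient is $\lesssim\lm^{\al}\ell_S^{-d}\cdot\ell_S^{\al}$, and the same splitting with the bounded supremum (the first hypothesis) gives finiteness. For $\ell_S>1$ the exponential decay absorbs everything, and the first factor is bounded by $(v(S)/|S|)^{1/(\tht_2p)}\le \|v\|_{\tht_1}^{1/(\tht_2p)}|S|^{-1/(\tht_1\tht_2 p)}$ via Hölder. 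The part $\vphi$ versus $(1-\Dl)^{\al/2}\vphi$ is handled by the standard multiplier bound $\|(1-\Dl)^{\al/2}\vphi\|_p\lesssim\|\vphi\|_p+\|(-\Dl)^{\al/2}\vphi\|_p$. The constant in front of $\|(-\Dl)^{\al/2}\vphi\|_p$ from this piece must also be made small. We will choose the splitting so that only the local piece carries $(-\Dl)^{\al/2}\vphi$, and the rest is estimated by $\|\vphi\|_p$ through a smooth cutoff of the Bessel kernel away from the origin, whose Fourier multiplier maps $L^p\to L^p$ with bound depending on $\lm$.
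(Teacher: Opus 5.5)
Your local estimate is the paper's computation. With $\sg_1={\rm d}x$, $p_1=p$, $\sg_2=v$, $p_2=q'$, the splitting $\frac1q=\frac1{\tht_2p}+\frac{\tht_1}r$ and the maximal theorem give $A_0\lesssim\|v\|_{L^{\tht_1}({\rm d}x)}^{\tht_1/r}\sup_{\ell_S\lesssim1/\lm}\ell_S^{\al}(v(S)/|S|)^{\frac1{\tht_2p}}$, and $\tht<\tht_1$ is exactly what (b) needs.

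\textbf{The gap is the global piece.} You flag it as the main obstacle but do not close it.
\begin{enumerate}
\item The first sketch, bounding the Riesz tail by averages of $|g|$, cannot work. The tail $|x|^{\al-d}\1_{|x|>1/\lm}$ is not integrable. Moreover, once absolute values are put on $g$, any bound is in terms of $\|g\|_{L^p}=\|(-\Dl)^{\al/2}\vphi\|_{L^p}$, never $\|\vphi\|_{L^p}$.
\item The same objection applies to running Theorem~\ref{thm2.2} on the large-scale part of $G_\al$ applied to $(1-\Dl)^{\al/2}\vphi$. You get finiteness, but with an $O(1)$ constant in front of $\|(-\Dl)^{\al/2}\vphi\|_{L^p}$.
\item Your final idea is the right one: use a smooth cutoff and move $(1-\Dl)^{\al/2}$ onto the kernel, giving $\Psi_\lm:=(1-\Dl)^{\al/2}[G_\al(1-\chi(\lm\,\cdot))]\in\mathcal S$. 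But the conclusion you draw, an $L^p\to L^p$ bound, does not suffice. The embedding $L^p({\rm d}x)\hookrightarrow L^q(v)$ holds only if $v\in L^{p/(p-q)}$, which is not assumed. You only have $v\in L^{\tht_1}$, and $\tht_1=(p-q/\tht_2)/(p-q)<p/(p-q)$.
\end{enumerate}

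What you need is an $L^p\to L^q(v)$ bound for $\vphi\mapsto\Psi_\lm*\vphi$. H\"older and Young give
\[
\|\Psi_\lm*\vphi\|_{L^q(v)}
\le\|v\|_{L^{\tht_1}}^{1/q}\,\|\Psi_\lm*\vphi\|_{L^{q\tht_1'}}
\le\|v\|_{L^{\tht_1}}^{1/q}\,\|\Psi_\lm\|_{L^t}\,\|\vphi\|_{L^p},
\qquad 1+\frac1{q\tht_1'}=\frac1t+\frac1p .
\]
This is legitimate because the exponent relation gives $p/q=(\tht_1-1/\tht_2)/(\tht_1-1)<\tht_1'$, so $q\tht_1'>p$. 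Define the local piece with the same smooth $\chi(\lm\,\cdot)$, so that the two pieces add up to $G_\al$. With this supplement your route works.

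\textbf{How the paper differs.} The paper avoids a separate global estimate by putting $\lm$ into the resolvent rather than into a truncation. It writes $I=J_{\al,\lm}\circ T_m\circ(\lm^\al I+(-\Dl)^{\al/2})$. Here $T_m$ is bounded on $L^p$ uniformly in $\lm$, by Mihlin, since its symbol is a dilate of the $\lm=1$ symbol. So it suffices that $\|J_{\al,\lm}\|_{L^p\to L^q(v)}<\eps$ for large $\lm$, and then $C(\eps)\approx\eps\lm^{\al}$.

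The sparse domination of Lemma~\ref{lem4.2} has $K(S)|S|=\ell_S^\al$ for $\ell_S\le1/\lm$ and $K(S)|S|=\lm^{-\al}$ for larger cubes. So the large cubes automatically carry a small factor. Your $A_0$ computation, the argument of Proposition~\ref{prp4.1}, and $\sup_{\ell_Q>1}v(Q)/|Q|\le\|v\|_{L^{\tht_1}}$ then make the whole operator small at once. No cutoff or Schwartz-kernel estimate is needed. Your version trades this for an explicit but $\lm$-dependent estimate of the smooth tail.
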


\begin{rem}\label{rem1.2}
It is naive to think that
$C(\eps)=\|v\|_{L^{\8}({\rm d}x)}=\8$.
Thanks to the smoothness of $\vphi$, 
one has $C(\eps)<\8$ when 
$v$ belongs to vanishing Morrey spaces.
Theorem \ref{thm4.1}, 
for the case $p=q$ and 
$v$ be a~power weight, 
was first due to in \cite{CDJ}.
In \cite{HKST}, 
we gave some Morrey-type sufficient conditions for which
$L^p \to L^q$, $1<p,q<\8$, 
infinitesimal relative bounds holds 
for Schr\"{o}dinger operators 
$(-\Dl)^{\al/2}+v$ 
in terms of bilinear embedding theorems for dyadic positive operators.
See also \cite{Ch}.
\end{rem}

\subsection{$A_2$ conjecture}\label{ssec1.2}
In the rest of the Introduction,
we present the proof of the so-called 
$A_2$ conjecture, 
in order to illustrate the overall strategy employed throughout this paper, 
following the approach of \cite{Le}.
For further details on the $A_2$ conjecture, 
we refer the reader to the excellent survey \cite{Hy} 
and the paper \cite{Le}.

Let $\cS\subset\cD$ be a~$\eta$-sparse family.
Define
\[
\cA_{\cS}f
:=
\sum_{S\in\cS}
\frac1{|S|}\int_{S}f\,{\rm d}x\,\1_{S},
\]
where $\1_{E}$ denotes the characteristic function of the set $E$. 
Suppose that $w\in A_p$.
We shall prove that
\[
\|\cA_{\cS}f\|_{L^p(w)}
\le c_{n,p,\eta}
[w]_{A_p}^{\max(1,p'/p)}
\|f\|_{L^p(w)}.
\]

If we set
$\sg=w^{-\frac1{p-1}}=w^{-p'/p}$,
then $\sg\in A_{p'}$.
By the Sawyer trick
(Notice that
if we let $F=f\sg$, then 
$f^p\sg=F^pw$),
we need only verify that
\[
\|\cA_{\cS}[f\sg]\|_{L^p(w)}
\le c_{n,p,\eta}
[w]_{A_p}^{\max(1,p'/p)}
\|f\|_{L^p(\sg)}.
\]
By duality, this is equivalent to
\[
\|\cA_{\cS}[gw]\|_{L^{p'}(\sg)}
\le c_{n,p,\eta}
[w]_{A_p}^{\max(1,p'/p)}
\|g\|_{L^{p'}(w)},
\]
and, these can be verified by the following bilinear embedding inequality:
\[
\sum_{S\in\cS}
\frac1{|S|}
\int_{S}f\sg\,{\rm d}x
\int_{S}gw\,{\rm d}x
\le c_{n,p,\eta}
[w]_{A_p}^{\max(1,p'/p)}
\|f\|_{L^p(\sg)}
\|g\|_{L^{p'}(w)}.
\]

There holds
\begin{align*}
{\rm(i)}
&:=
\sum_{S\in\cS}
\frac1{|S|}
\int_{S}f\sg\,{\rm d}x
\int_{S}gw\,{\rm d}x
\\ &=
\sum_{S\in\cS}
\frac1{|S|}
\frac{\sg(S)}{\sg(E_{\cS}(S))^{1/p}}
\frac{w(S)}{w(E_{\cS}(S))^{1/p'}}
\\ &\quad\times
\fint_{S}f\,{\rm d}\sg
\cdot
\sg(E_{\cS}(S))^{1/p}
\fint_{S}g\,{\rm d}w
\cdot
w(E_{\cS}(S))^{1/p'},
\end{align*}
where $\fint_{S}f\,{\rm d}\sg$ 
denotes the integral average 
$\sg(S)^{-1}\int_{S}f\,{\rm d}\sg$. 
By letting
\[
T_p(w;S)
:=
\frac1{|S|}
\frac{\sg(S)}{\sg(E_{\cS}(S))^{1/p}}
\frac{w(S)}{w(E_{\cS}(S))^{1/p'}},
\]
and using H\"{o}lder's inequality,
we have that
\begin{align*}
{\rm(i)}
&\le
\sup_{S\in\cS}T_p(w;S)
\sum_{S\in\cS}
\fint_{S}f\,{\rm d}\sg
\cdot
\sg(E_{\cS}(S))^{1/p}
\fint_{S}g\,{\rm d}w
\cdot
w(E_{\cS}(S))^{1/p'}
\\ &\le
\sup_{S\in\cS}T_p(w;S)
\times
\lt[
\sum_{S\in\cS}
\lt(\fint_{S}f\,{\rm d}\sg\rt)^p
\sg(E_{\cS}(S))
\rt]^{1/p}
\lt[
\sum_{S\in\cS}
\lt(\fint_{S}g\,{\rm d}w\rt)^{p'}
w(E_{\cS}(S))
\rt]^{1/p'} \\
&\le
\sup_{S\in\cS}T_p(w;S)
\lt(
\int_{\R^d}
M_{\sg}f(x)^p\sg(x)\,{\rm d}x
\rt)^{1/p}
\lt(
\int_{\R^d}
M_wg(x)^{p'}w(x)\,{\rm d}x\rt)^{1/p'}
\\ &\lesssim
\sup_{S\in\cS}T_p(w;S)
\|f\|_{L^p(\sg)}
\|g\|_{L^{p'}(w)},
\end{align*}
where we have used the boundedness of 
the weighted dyadic Hardy-Littlewood  maximal operators 
$M_{\sg}$ and $M_w$ on 
$L^{p'}(\sg)$ and $L^p(w)$, 
respectively.

It remains to show that  
\[
T_p(w;S)
=
\frac1{|S|}
\frac{\sg(S)}{\sg(E_{\cS}(S))^{1/p}}
\frac{w(S)}{w(E_{\cS}(S))^{1/p'}}
\le c_{n,p,\eta}
\lt([w]_{A_p}^{1/p}\rt)^{\max(p,p')}.
\]
By H\"{o}lder's inequality,
\[
\eta|S|
\le
|E_{\cS}(S)|
\le
w(E_{\cS}(S))^{1/p}
\sg(E_{\cS}(S))^{1/p'}.
\]
From this,
\[
\frac{w(S)^{1/p}\sg(S)^{1/p'}}{w(E_{\cS}(S))^{1/p}\sg(E_{\cS}(S))^{1/p'}}
\le
\frac{w(S)^{1/p}\sg(S)^{1/p'}}{\eta|S|}
\le
\frac1\eta
[w]_{A_p}^{1/p},
\]
and therefore,
\begin{align*}
T_p(w;S)
&=
\frac{w(S)^{1/p}\sg(S)^{1/p'}}{|S|}
\frac{w(S)^{1/p'}}{w(E_{\cS}(S))^{1/p'}}
\frac{\sg(S)^{1/p}}{\sg(E_{\cS}(S))^{1/p}}
\\ &\le
[w]_{A_p}^{1/p}
\lt[\lt(\frac{w(S)}{w(E_{\cS}(S))}\rt)^{1/p}\rt]^{p/p'}
\lt[\lt(\frac{\sg(S)}{\sg(E_{\cS}(S))}\rt)^{1/p'}\rt]^{p'/p}
\\ &\le
[w]_{A_p}^{1/p}
\lt(
\frac{w(S)^{1/p}\sg(S)^{1/p'}}{w(E_{\cS}(S))^{1/p}\sg(E_{\cS}(S))^{1/p'}}
\rt)^{\max(p/p',p'/p)}
\\ &\le
c_{n,p,\eta}
\lt([w]_{A_p}^{1/p}\rt)^{
1+\max(p/p',p'/p)
}
\\ &=
c_{n,p,\eta}
\lt([w]_{A_p}^{1/p}\rt)^{\max(p,p')}
=
c_{n,p,\eta}
[w]_{A_p}^{\max(1,p'/p)}.
\end{align*}
This completes the proof.
\qed

\section{Multilinear embedding theorems}\label{sec2}
In what follows we prove Theorems 
\ref{thm2.1} and \ref{thm2.2}.

\subsection{Proof of Theorem \ref{thm2.1}}\label{ssec2.1}
Let $1<p_1,\ldots,p_n<\8$ 
with $1/p_1+\cdots+1/p_n\ge 1$.
For the nonnegative functions 
$f_1,\ldots,f_n\in L_{{\rm loc}}^1({\rm d}x)$, 
we analyze the following multilinear embedding inequality:
\begin{equation}\label{2.1}
\sum_{S\in\cS}
K(S)\prod_{i=1}^n
\int_{S}f_i\,{\rm d}\sg_i
\lesssim C_1
\prod_{i=1}^n
\|f_i\|_{L^{p_i}(\sg_i)}.
\end{equation}
Letting
\[
C_1
:=
\sup_{S\in\cS}
K(S)\prod_{i=1}^n
\frac{\sg_i(S)}{\sg_i(E_{\cS}(S))^{1/p_i}},
\]
using multiple H\"{o}lder's inequality\footnote{
Letting
$\tht=1/p_1+\cdots+1/p_n$,
using multiple H\"{o}lder's inequality
with the exponents $\tht p_i$,
we apply the norm inequality
$\|\quad\|_{l^{p_i}}\ge\|\quad\|_{l^{\tht p_i}}$.
}
and using the boundedness of dyadic Hardy-Littlewood maximal operators,
we have that
\begin{align*}
&\sum_{S\in\cS}
K(S)\prod_{i=1}^n
\int_{S}f_i\,{\rm d}\sg_i
\\ &\le C_1
\sum_{S\in\cS}\prod_{i=1}^n
\fint_{S}f_i\,{\rm d}\sg_i
\cdot
\sg_i(E_{\cS}(S))^{1/p_i}
\\ &\le C_1
\prod_{i=1}^n
\lt[
\sum_{S\in\cS}
\lt(\fint_{S}f_i\,{\rm d}\sg_i\rt)^{p_i}
\sg_i(E_{\cS}(S))
\rt]^{1/p_i}
\\ &\lesssim C_1
\prod_{i=1}^n
\|f_i\|_{L^{p_i}(\sg_i)}.
\end{align*}

\begin{rem}\label{rem2.1}
We notice that, 
if the weight $w\in A_{\8}$, 
then, for some $p_0>1$, 
the Hardy-Littlewood maximal operator $M$ 
is bounded on $L^{p_0}(w)$.
This implies that
\[
w(S)
\le \eta^{-p_0}
\lt(\frac{|E_{\cS}(S)|}{|S|}\rt)^{p_0}
w(S)
\le
\int_{S}
\lt(M\1_{E_{\cS}(S)}\rt)^{p_0}
\,{\rm d}w
\lesssim
w(E_{\cS}(S)).
\]
\end{rem}

If we assume 
$\sg_1,\ldots,\sg_n\in A_{\8}$,
and apply Remark \ref{rem2.1},
then we obtain
\begin{align*}
C_1
&\lesssim
\sup_{S\in\cS}
K(S)
\prod_{i=1}^n\sg_i(S)^{1/p_i'}
\\ &=
\sup_{S\in\cS}
K(S)|S|^{n-1/p_1-\cdots-1/p_n}
\prod_{i=1}^n
\lt(\frac{\sg_i(S)}{|S|}\rt)^{1/p_i'},
\end{align*}
which proves Theorem \ref{thm2.1} 
part (a).
Here, we have used
\[
n-1/p_1-\cdots-1/p_n
=
1/p_1'+\cdots+1/p_n'.
\]

Rewrite
\begin{equation}\label{2.2}
\sum_{S\in\cS}
K(S)\prod_{i=1}^n
\int_{S}f_i\,{\rm d}\sg_i
=
\sum_{S\in\cS}
K(S)|S|^n\prod_{i=1}^n
\fint_{S}f_i\sg_i\,{\rm d}x.
\end{equation}
Let $\tht>1$ and set 
$q_i'=\tht p_i'$. 
We notice that
\begin{equation}\label{2.3}
\fint_{S}f_i\sg_i\,{\rm d}x
\le
\lt(\fint_{S}(\sg_i^{1/p_i'})^{q_i'}\,{\rm d}x\rt)^{1/q_i'}
\lt(\fint_{S}(f_i\sg_i^{1/p_i})^{q_i}\,{\rm d}x\rt)^{1/q_i}.
\end{equation}
Thanks to this quite simple inequality,
we let
\[
C_2
:=
\sup_{S\in\cS}
K(S)|S|^{n-1/p_1-\cdots-1/p_n}
\prod_{i=1}^n
\lt(\fint_{S}(\sg_i^{1/p_i'})^{q_i'}\,{\rm d}x\rt)^{1/q_i'},
\]
apply the multiple H\"{o}lder's inequality,
notice that
$p_i>q_i$ since 
$q_i'=\theta p_i'>p_i'$, and 
use the boundedness of dyadic Hardy-Littlewood maximal operators
to conclude that
\begin{align*}
&\sum_{S\in\cS}
K(S)|S|^n\prod_{i=1}^n
\fint_{S}f_i\sg_i\,{\rm d}x
\\ &\le C_2
\sum_{S\in\cS}\prod_{i=1}^n
\lt(\fint_{S}(f_i\sg_i^{1/p_i})^{q_i}\,{\rm d}x\rt)^{1/q_i}
|S|^{1/p_i}
\\ &\lesssim C_2
\prod_{i=1}^n
\lt[
\sum_{S\in\cS}
\lt(\fint_{S}(f_i\sg_i^{1/p_i})^{q_i}\,{\rm d}x\rt)^{p_i/q_i}
|E_{\cS}(S)|
\rt]^{1/p_i}
\\ &\le C_2
\prod_{i=1}^n
\lt(
\int_{\R^d}
M[(f_i\sg_i^{1/p_i})^{q_i}](x)^{p_i/q_i}
\,{\rm d}x
\rt)^{1/p_i}
\\ &\lesssim C_2
\prod_{i=1}^n
\|f_i\|_{L^{p_i}(\sg_i)}.
\end{align*}
There holds
\begin{equation}\label{2.4}
K(S)|S|^{n-1/p_1-\cdots-1/p_n}
\prod_{i=1}^n
\lt(\fint_{S}(\sg_i^{1/p_i'})^{q_i'}\,{\rm d}x\rt)^{1/q_i'}
=
K(S)|S|^{n-1/p_1-\cdots-1/p_n}
\prod_{i=1}^n
\lt(\frac{\sg_i^{\tht}(S)}{|S|}\rt)^{\frac1{\tht p_i'}},
\end{equation}
which proves Theorem \ref{thm2.1} 
part (b).
\qed

\subsection{Proof of Theorem \ref{thm2.2}}\label{ssec2.2}
Let $1<p_1,\ldots,p_n<\8$ 
with $1/p_1+\cdots+1/p_n<1$.
Letting 
$1/r+1/p_1+\cdots+1/p_n=1$,
we have that
\begin{align*}
&\sum_{S\in\cS}
K(S)\prod_{i=1}^n
\int_{S}f_i\,{\rm d}\sg_i
\\ &=
\sum_{S\in\cS}
K(S)\prod_{i=1}^n
\frac{\sg_i(S)}{\sg_i(E_{\cS}(S))^{1/p_i}}
\cdot
\prod_{i=1}^n
\fint_{S}f_i\,{\rm d}\sg_i
\cdot
\sg_i(E_{\cS}(S))^{1/p_i}
\\ &\le
\lt[
\sum_{S\in\cS}
\lt(
K(S)\prod_{i=1}^n
\frac{\sg_i(S)}{\sg_i(E_{\cS}(S))^{1/p_i}}
\rt)^r\rt]^{1/r}
\prod_{i=1}^n
\lt[
\sum_{S\in\cS}
\lt(\fint_{S}f_i\,{\rm d}\sg_i\rt)^{p_i}
\sg_i(E_{\cS}(S))
\rt]^{1/p_i}.
\end{align*}
By letting
\[
C_3
=
\lt[
\sum_{S\in\cS}
\lt(
K(S)\prod_{i=1}^n
\frac{\sg_i(S)}{\sg_i(E_{\cS}(S))^{1/p_i}}
\rt)^r\rt]^{1/r},
\]
this yields
\[
\sum_{S\in\cS}
K(S)\prod_{i=1}^n
\int_{S}f_i\,{\rm d}\sg_i
\lesssim C_3
\prod_{i=1}^n
\|f_i\|_{L^{p_i}(\sg_i)}.
\]
If we assume 
$\sg_1,\ldots,\sg_n\in A_{\8}$
and apply Remark \ref{rem2.1},
then we obtain
\begin{align*}
C_3
&\lesssim
\lt[
\sum_{S\in\cS}
\lt(
K(S)
\prod_{i=1}^n\sg_i(S)^{1/p_i'}
\rt)^r\rt]^{1/r}
\\ &=
\lt[
\sum_{S\in\cS}
\lt(
K(S)|S|^{n-1}\prod_{i=1}^n
\lt(\frac{\sg_i(S)}{|S|}\rt)^{1/p_i'}
\rt)^r
|E_{\cS}(S)|
\rt]^{1/r},
\end{align*}
which proves Theorem \ref{thm2.2} 
part (a).
Here, we have used
\[
n+1/r
=
1+1/p_1'+\cdots+1/p_n'.
\]

Theorem \ref{thm2.2} 
part (b) follows from
\eqref{2.2}--\eqref{2.4} and
\[
|S|
=
|S|^{1/r}
\prod_{i=1}^n|S|^{1/p_i}.
\qed
\]

\section{Application to the power weights}\label{sec3}
In what follows we prove Theorem \ref{thm3}.
We set $K(S)=\ell_{S}^{\al-(n-1)d}$, $S\in\cS$.
In this case,
the operator $\cA_{\cS,K}$, 
as defined in the Introduction,
is the sparse operator 
associated with the Riesz potential 
(cf.~\cite{Cr}).
We use the following simple and nice lemma.

\begin{lem}[{\rm\cite[Example 113]{SDH20}}]\label{lem3.1}
Let $Q\in\cQ(\R^d)$.
If $\bt>-d$, then
\[
\int_{Q}|x|^{\bt}\,{\rm d}x
\approx
\max(\ell_{Q},\,|c_{Q}|)^{\bt}|Q|.
\]
\end{lem}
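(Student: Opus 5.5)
We prove the two-sided estimate by splitting according to whether the cube is near the origin relative to its size. Fix $Q$ with center $c_Q$ and side $\ell_Q$. For every $x\in Q$ we have $|x|\le |c_Q|+\frac{\sqrt d}{2}\ell_Q\lesssim \max(\ell_Q,|c_Q|)$. We distinguish two cases:
\[
\text{(I)}\quad |c_Q|\ge \sqrt d\,\ell_Q ,\qquad \text{(II)}\quad |c_Q|<\sqrt d\,\ell_Q .
\]

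\emph{Case (I).} For $x\in Q$,
\[
|x|\ge |c_Q|-\tfrac{\sqrt d}{2}\ell_Q\ge \tfrac12|c_Q| ,
\]
so $|x|\approx |c_Q|\approx\max(\ell_Q,|c_Q|)$ uniformly on $Q$. Then $|x|^{\bt}\approx \max(\ell_Q,|c_Q|)^{\bt}$ pointwise on $Q$ for any real $\bt$, and integrating gives the claim immediately.

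\emph{Case (II).} Here $\max(\ell_Q,|c_Q|)\approx\ell_Q$, and $Q\subset B(0,C\ell_Q)$ for some $C=C(d)$.

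For the upper bound, use polar coordinates:
\[
\int_Q|x|^{\bt}\,{\rm d}x\le\int_{B(0,C\ell_Q)}|x|^{\bt}\,{\rm d}x=c_d\int_0^{C\ell_Q}s^{\bt+d-1}\,{\rm d}s\approx \ell_Q^{\bt+d}.
\]
This is where $\bt>-d$ is needed, since it makes the radial integral converge at $0$.

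For the lower bound, we find a subcube $Q'\subset Q$ with $\ell_{Q'}\approx\ell_Q$ on which $|x|\approx\ell_Q$. Take $Q'$ to be one of the $4^d$ subcubes of side $\ell_Q/4$ obtained by subdividing $Q$, chosen at distance $\gtrsim\ell_Q$ from the origin. For instance, take a corner subcube of $Q$ in the orthant direction pointing away from the origin relative to $c_Q$: in each coordinate choose the side where $x_j$ has the same sign as $(c_Q)_j$. Then $|x_j|\ge \ell_Q/4$ on that subcube in each coordinate, so $|x|\ge\ell_Q/4$, while $|x|\lesssim\ell_Q$ holds throughout $Q$. Hence
\[
\int_Q|x|^{\bt}\,{\rm d}x\ge\int_{Q'}|x|^{\bt}\,{\rm d}x\approx \ell_Q^{\bt}\,|Q'|\approx \ell_Q^{\bt}\,|Q|
\]
for either sign of $\bt$.

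The only delicate point is the upper bound when $\bt<0$ and $Q$ contains or is near the origin, where the singularity must be integrable. The comparison with the ball $B(0,C\ell_Q)$ together with $\bt>-d$ handles it. For $\bt\ge0$ the upper bound in Case (II) is trivial from $|x|\lesssim\ell_Q$ on $Q$. All implicit constants depend only on $d$ and $\bt$.
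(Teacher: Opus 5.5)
Your proof is correct. The paper does not prove this lemma; it quotes it from \cite[Example 113]{SDH20}, so there is no in-text argument to compare with. Your near/far dichotomy ($|c_Q|\ge\sqrt d\,\ell_Q$ versus $|c_Q|<\sqrt d\,\ell_Q$) is the standard way to prove this estimate, and it gives a complete, self-contained argument with constants depending only on $d$ and $\beta$.

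One small streamlining is possible. In Case (II), the pointwise bound $|x|\lesssim\ell_Q$ on all of $Q$ already gives the lower bound when $\beta\le 0$ and the upper bound when $\beta\ge 0$. So the corner subcube is only needed for the lower bound when $\beta>0$. Likewise, the comparison with $B(0,C\ell_Q)$ is only needed for the upper bound when $\beta<0$, which is the one place where $\beta>-d$ enters.
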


\subsection{
The case $1/p_1+\cdots+1/p_n\ge 1$
}
Let
$\sg_i=|x|^{\bt_i}$ with $\bt_i>-d$.
Then $\sg_i\in A_{\8}$.
We examine the condition in 
Theorem \ref{thm2.1} part (a):
\begin{align*}
&K(S)|S|^{n-1/p_1-\cdots-1/p_n}\prod_{i=1}^n
\lt(\frac{\sg_i(S)}{|S|}\rt)^{1/p_i'}
\\ &\approx
\ell_{S}^{\al+d-d/p_1-\cdots-d/p_n}
\max(\ell_{S},\,|c_{S}|)^{
\bt_1/p_1'+\cdots+\bt_n/p_n'
}\\ &\le
\max(\ell_{S},\,|c_{S}|)^{
\al+d(1-1/p_1-\cdots-1/p_n)
+
\bt_1/p_1'+\cdots+\bt_n/p_n'
}=1,
\end{align*}
by the assumption that
$\al
+
d(1-1/p_1-\cdots-1/p_n)
+
\bt_1/p_1'+\cdots+\bt_n/p_n'=0$.

\begin{rem}
If we set $n=2$, 
$p_2'=q$, and $p_1=p$
and define 
$\gm_i:=-\bt_i/p_i'$,
then applying 
Theorem \ref{thm3} part (a),
we obtain the following inequality,
which is known as 
the Stein-Weiss inequality:
\[
\||x|^{\gm_2}I_{\al}f(x)\|_{L^q(\R^d)}
\lesssim
\||x|^{-\gm_1}f(x)\|_{L^p(\R^d)},
\]
provided that
$
\frac{1}{q}
=
\frac{1}{p}
-\frac{\al-\gm_1-\gm_2}d
$.
\end{rem}

\subsection{
The case $1/p_1+\cdots+1/p_n<1$
}
Let
$\sg_i=|x|^{\bt_i}\max(|x|,\,1)^{p_i'\rho/n}$, 
$\bt_i+p_i'\rho/n>-d$.
Then each $\sg_{i}$ also belongs to $A_{\8}$.
Suppose that $\rho<-d/r$.
Noticing that $\rho$ is negative,
we first observe that
\begin{align*}
\sg_i(S)
&\le
\min\lt(
\int_S|x|^{\bt_i+p_i'\rho/n}\,{\rm d}x,
\int_S|x|^{\bt_i}\,{\rm d}x
\rt)\\
&\approx
\min\lt(
\max(\ell_S,|c_S|)^{\bt_i+p_i'\rho/n}
|S|,
\max(\ell_S,|c_S|)^{\bt_i}|S|
\rt)\\
&=
|S|
\max(\ell_S,|c_S|)^{\bt_i}
\max(\ell_{S},\,|c_{S}|,\,1)^{p_i'\rho/n}.
\end{align*}
Therefore,
we have
\begin{align*}
&\lt(
K(S)|S|^{n-1}\prod_{i=1}^n
\lt(\frac{\sg_i(S)}{|S|}\rt)^{1/p_i'}
\rt)^r
|E_{\cS}(S)|
\\ &\lesssim
\lt(
\ell_{S}^{\al}
\max(\ell_{S},\,|c_{S}|)^{
\bt_1/p_1'+\cdots+\bt_n/p_n'
}\rt)^r
|E_{\cS}(S)|
\max(\ell_{S},\,|c_{S}|,\,1)^{r\rho}
\\ &\le
\lt(
\max(\ell_{S},\,|c_{S}|)^{
\al+\bt_1/p_1'+\cdots+\bt_n/p_n'
}\rt)^r
|E_{\cS}(S)|
\max(\ell_{S},\,|c_{S}|,\,1)^{r\rho}
\\ &=
|E_{\cS}(S)|
\max(\ell_{S},\,|c_{S}|,\,1)^{r\rho},
\end{align*}
by the assumption that
$\al+\bt_1/p_1'+\cdots+\bt_n/p_n'=0$.
Thus, for the finiteness, 
\begin{align*}
&\sum_{S\in\cS}
|E_{\cS}(S)|
\max(\ell_{S},\,|c_{S}|,\,1)^{r\rho}\\
&\approx
\sum_{S\in\cS,\,\overline{S}\ni0}
|E_{\cS}(S)|
\max(\ell_{S},\,1)^{r\rho}
+
\sum_{S\in\cS,\,\overline{S}\not\ni0}
|E_{\cS}(S)|
\max(|c_{S}|,\,1)^{r\rho}\\
&\lesssim
\sum_{j\in\Z}
2^{jd}
\max(2^j,1)^{r\rho}
+
\sum_{S\in\cS}
\int_{E_{\cS}(S)}\max(|x|,1)^{r\rho}\,{\rm d}x\\
&<\infty
\end{align*}
since $\rho<-d/r$.

\section{Application to the Bessel potentials}\label{sec4}
In what follows we apply Theorems 
\ref{thm2.1} and \ref{thm2.2}
to the Bessel potentials.

\subsection{Bessel potentials}\label{ssec4.1}
For a~rapidly decreasing function $f$,
define its Fourier transform and 
its inverse Fourier transform as
\[
\hat{f}(\xi)
:=
\int_{\R^d}
f(x)e^{-2\pi ix\cdot\xi}\,{\rm d}x
\quad\text{and}\quad
f^{\vee}(x)
:=
\int_{\R^d}
f(\xi)e^{2\pi ix\cdot\xi}\,{\rm d}\xi.
\]

\begin{defn}\label{def2.1}
Let $0<\al<d$ and $0<\lm<\8$. 
The Bessel potential of order $\al$ 
is the operator 
$J_{\al,\lm}=(\lm^2I-\Dl)^{-\al/2}$ 
given by
\[
J_{\al,\lm}(f)
:=
\lt(\hat{f}\hat{G_{\al,\lm}}\rt)^{\vee}
=
f*G_{\al,\lm},
\]
where 
\[
G_{\al,\lm}(x)
=
\lt((\lm^2+4\pi^2|\xi|^2)^{-\al/2}\rt)^{\vee}(x).
\]
If $\lm=1$, 
we omit the subscript $1$ and 
simply write
$J_{\al}$ and $G_{\al}$, respectively.
\end{defn}

By the definition, one sees that 
\begin{equation}\label{4.1}
G_{\al,\lm}(x)
=
\lm^{d-\al}G_{\al}(\lm x),
\quad x\in\R^d.
\end{equation}

\begin{lem}[{\rm\cite{HKST}}]\label{lem4.1}
Let $0<\al<d$. Then 
$G_{\al}$ is a~smooth function on 
$\R^d\setminus\{0\}$ 
that satisfies 
$G_{\al}(x)>0$, $x\in\R^d$, and 
there exist positive finite constants 
$C_{\al,d}$ and $c_{\al,d}$ 
such that
\[
G_{\al}(x)
\le
\begin{cases}
C_{\al,d}e^{-\frac{|x|}2},
\quad\text{when $|x|>1$},
\\
c_{\al,d}|x|^{\al-d},
\quad\text{when $|x|\le 1$}.
\end{cases}
\]
\end{lem}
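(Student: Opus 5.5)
The plan is to prove Lemma~\ref{lem4.1} through the Gamma-function subordination of the Bessel kernel to the heat kernel. With the Fourier convention of Definition~\ref{def2.1}, for $a>0$ we have $a^{-\al/2}=\Gamma(\al/2)^{-1}\int_0^\8 e^{-ta}t^{\al/2}\,\frac{{\rm d}t}{t}$. Taking $a=1+4\pi^2|\xi|^2$ and using that the inverse Fourier transform of $e^{-4\pi^2t|\xi|^2}$ is $(4\pi t)^{-d/2}e^{-|x|^2/(4t)}$, I expect
\[
G_{\al}(x)
=
\frac1{\Gamma(\al/2)}
\int_0^\8
(4\pi t)^{-d/2}
e^{-t-\frac{|x|^2}{4t}}
t^{\frac{\al}2-1}\,{\rm d}t .
\]
Since $\al<d$, the double integral over $(\xi,t)$ is not absolutely convergent: the $\xi$-integral gives $t^{(\al-d)/2-1}$, which is not integrable at $t=0$. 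So I will justify the identity in the distributional sense. I pair both sides with a Schwartz test function $\psi$ and write $\langle G_\al,\psi\rangle=\int (1+4\pi^2|\xi|^2)^{-\al/2}\psi^{\vee}(\xi)\,{\rm d}\xi$. I then insert the Gamma integral and apply Fubini, which is legitimate because $\int_0^\8\int |\psi^\vee(\xi)| e^{-t}e^{-4\pi^2 t|\xi|^2}t^{\al/2-1}\,{\rm d}\xi\,{\rm d}t<\8$. Finally I use the Fourier pairing of the heat kernel for each fixed $t$. This gives the formula above as an identity of locally integrable functions, and this justification is the only delicate point.

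Positivity of $G_\al$ is then immediate, since the integrand is strictly positive. For smoothness on $\R^d\setminus\{0\}$, fix $|x|\ge\delta>0$ and differentiate under the integral sign. Every $x$-derivative of $e^{-|x|^2/(4t)}$ is a polynomial in $x$ and $t^{-1}$ times $e^{-|x|^2/(4t)}$. On $|x|\ge\delta$ this is dominated by $C_{N}t^{-N}e^{-\delta^2/(4t)}$, which kills every negative power of $t$ near $t=0$, while the factor $e^{-t}$ handles $t\to\8$. Dominated convergence then gives $G_\al\in C^\8(\R^d\setminus\{0\})$.

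For the estimate when $|x|\le1$, I drop the factor $e^{-t}\le1$ and substitute $t=|x|^2s$. This yields
\[
G_\al(x)\le C|x|^{\al-d}\int_0^\8 s^{\frac{\al-d}2-1}e^{-\frac1{4s}}\,{\rm d}s .
\]
The integral converges at $s=\8$ because $\al<d$, and at $s=0$ because of the exponential factor. This gives the constant $c_{\al,d}$.

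For $|x|=r>1$, I split the exponent as
\[
t+\frac{r^2}{4t}
=2\lt(\frac t2+\frac{r^2}{8t}\rt)
\ge \frac r2+\frac t2+\frac{r^2}{8t},
\]
using AM--GM on one copy, which gives $\frac t2+\frac{r^2}{8t}\ge\frac r2$. Then
\[
G_\al(x)\le Ce^{-r/2}\int_0^\8 t^{\frac{\al-d}2-1}e^{-\frac t2-\frac{r^2}{8t}}\,{\rm d}t
\le Ce^{-r/2}\int_0^\8 t^{\frac{\al-d}2-1}e^{-\frac t2-\frac1{8t}}\,{\rm d}t,
\]
and the last integral is a finite constant independent of $r$. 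This gives the constant $C_{\al,d}$ and completes the proof.
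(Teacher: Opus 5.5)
The paper gives no proof of this lemma (it is quoted from \cite{HKST}); your argument is the standard Gamma-function subordination of $G_\al$ to the heat kernel, as in Grafakos's treatment of Bessel potentials, and it is correct: the distributional justification of the integral formula, the scaling $t=|x|^2s$ for $|x|\le1$, and the AM--GM splitting that yields $e^{-|x|/2}$ for $|x|>1$ all check out. One cosmetic fix is needed in the smoothness step: the polynomial factor in $x$ is not bounded on all of $\{|x|\ge\delta\}$, so dominate locally on $\{\delta\le|x|\le R\}$, or absorb it via $|x|^k e^{-|x|^2/(8t)}\lesssim t^{k/2}$.
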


For $\tau\in\{0,\pm\frac13\}^d$, 
we define the dyadic grid by 
\[
\cD^{\tau}
:=
\{2^{-k}(m+\tau+[0,1)^d):\,
k\in\Z,\,m\in\Z^d\}.
\]

\begin{lem}[{\rm\cite{HKST}}]\label{lem4.2}
Let $0<\al<d$ and $0<\lm<\8$. 
We have that, 
for some appropriate sparse families 
$\cS_{\tau}\subset\cD^{\tau}$, 
$\tau\in\{0,\pm\frac13\}^d$, 
\[
J_{\al,\lm}(f)
\lesssim
\sum_{\tau\in\{0,\pm\frac13\}^d}
S_{\al,\lm}^{\tau}f,
\qquad f\ge 0.
\]
Here, 
\begin{equation}\label{4.2}
S_{\al,\lm}^{\tau}f
:=
\sum_{S\in\cS_{\tau}}
\frac{\min((\lm\ell_{S})^{\al},\,1)}{\lm^{\al}|S|}
\int_{S}f\,{\rm d}x\,\1_{S},
\qquad f\ge 0.
\end{equation}
\end{lem}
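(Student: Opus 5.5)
The plan is to prove the pointwise sparse domination of Lemma \ref{lem4.2} in two steps. First reduce to $\lm=1$ by the scaling \eqref{4.1}. Then dominate $G_{\al}$ by a dyadic kernel and run the standard sparse-domination argument for positive dyadic operators over the $3^d$ shifted grids $\cD^{\tau}$.

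\emph{Scaling.} By \eqref{4.1}, $J_{\al,\lm}f(x)=\lm^{-\al}J_{\al}[f(\cdot/\lm)](\lm x)$. The dilation $x\mapsto\lm x$ maps $\cD^\tau$ onto a dilated grid, which is again a dyadic grid. Sparseness is preserved, and a cube $S$ of side $\ell_S$ in the dilated picture corresponds to side $\lm\ell_S$. A domination for $\lm=1$ with coefficient $\min(\ell_S^{\al},1)/|S|$ therefore transfers to the coefficient $\min((\lm\ell_S)^\al,1)/(\lm^\al|S|)$ in \eqref{4.2}. If one insists on grids exactly of the form $\cD^\tau$, one instead runs the argument below directly with the kernel bound $G_{\al,\lm}(x)\lesssim \lm^{-\al}\min(\lm^{\al}|x|^{\al-d},\lm^{d}e^{-\lm|x|/2})$, obtained from Lemma \ref{lem4.1} and \eqref{4.1}. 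The computation is identical, so I describe the case $\lm=1$.

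\emph{Kernel discretization.} Using Lemma \ref{lem4.1}, I would show that for $x\ne y$,
\[
G_{\al}(x-y)\lesssim\sum_{Q\in\cQ_{\rm dyadic\ scales}:\,x,y\in Q}\frac{\min(\ell_Q^{\al},1)}{|Q|}\,(\text{summed over } \tau\text{-cubes}).
\]
By the three-lattice lemma, for each pair $x,y$ and each $k$ with $2^{-k}\ge|x-y|$ there is a $\tau$ and a cube $Q\in\cD^\tau$ with $\ell_Q\approx 2^{-k}$ containing both points. The sum over scales $\ell_Q\ge|x-y|$ of $\min(\ell_Q^\al,1)\ell_Q^{-d}$ is comparable to $|x-y|^{\al-d}$ when $|x-y|\le1$, because $\al<d$ makes the geometric sum converge at its smallest term. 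When $|x-y|>1$ it is comparable to $|x-y|^{-d}\ge e^{-|x-y|/2}$ up to constants. Hence
\[
J_\al f\lesssim\sum_\tau\sum_{Q\in\cD^\tau}\frac{\min(\ell_Q^\al,1)}{|Q|}\int_Qf\,\1_Q.
\]

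\emph{Sparsification.} This is the main step. For each $\tau$, the dyadic operator $T^\tau f=\sum_{Q}a_Q\langle f\rangle_Q\1_Q$, with $a_Q=\min(\ell_Q^\al,1)$, has to be dominated pointwise by a sparse sub-sum. For $f$ compactly supported I would localize to a large cube $Q_0$. Cubes above $Q_0$ contribute a convergent tail $\sum a_Q|Q|^{-1}\int f$; this is handled by a chain of ancestors, which is itself sparse. Inside $Q_0$ I would build the stopping family of maximal subcubes where $\langle f\rangle_Q>2\langle f\rangle_{\text{parent stopping cube}}$. This family is $\frac12$-sparse. Between consecutive stopping cubes $S\subset Q\subsetneq$ (next stopping level), $\langle f\rangle_Q\le2\langle f\rangle_S$, so
\[
\sum_{Q:\,\pi(Q)=S}a_Q\langle f\rangle_Q\1_Q(x)\le 2\langle f\rangle_S\sum_{Q\subset S,\,x\in Q}a_Q\lesssim a_S\langle f\rangle_S.
\]
The last inequality is the obstacle. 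It needs $\sum_{Q\subset S,\,x\in Q}\min(\ell_Q^\al,1)\lesssim\min(\ell_S^\al,1)$, and this fails for $\ell_S>1$, where the sum picks up about $\log\ell_S$ terms equal to $1$. To fix it I would first split $T^\tau$ into the part over cubes with $\ell_Q\le1$ and the part over cubes with $\ell_Q>1$. For cubes with $\ell_Q\le1$ the geometric decay gives the bound. For cubes with $\ell_Q>1$ the sum $\sum_{Q\ni x,\,\ell_Q>1}|Q|^{-1}\int_Qf$ is a standard sparse-dominatable averaging operator with coefficients $a_Q=1$. There one must instead exploit that the averages themselves decay: run the stopping time with the rule "$\langle f\rangle_Q>2\langle f\rangle_S$ or the scale drops below $1$". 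Alternatively, replace the exponential bound in the discretization by $e^{-|x-y|/2}\lesssim |x-y|^{-d-1}$, which yields coefficients $\ell_Q^{-1}$ with geometric decay upward, so the tree sum again telescopes. I would try this second option first, since it makes all coefficients summable along chains and gives the sparse bound with $\min(\ell_S^\al,1)$ (indeed something better) for $\ell_S>1$.
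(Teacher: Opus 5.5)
The paper gives no proof of this lemma (it is quoted from \cite{HKST}), so your argument has to stand on its own. The overall structure is right: kernel bound, three-lattice discretization, and a stopping time controlled by the sums $\sum_{Q\subseteq S,\,Q\ni x}a_Q$. But the discretization you actually carry out, and the first repair you propose, cannot work. After bounding $e^{-|x-y|/2}$ by $|x-y|^{-d}$, the large-scale part of your dyadic operator is $T_{>1}f=\sum_{Q\in\cD,\,\ell_Q>1}\langle f\rangle_Q\1_Q$. Take $f=\1_{Q_N}$ with $Q_N=[0,2^N)^d$. Every $x\in Q_N$ lies in the $N$ dyadic cubes of sides $2,4,\dots,2^N$, all contained in $Q_N$ and all with average $1$. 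Hence $T_{>1}f\ge N$ on $Q_N$ and $\|T_{>1}f\|_{L^p}\ge N\|f\|_{L^p}$. On the other hand, every operator $f\mapsto\sum_{S\in\cS}\min(\ell_S^{\al},1)\langle f\rangle_S\1_S$ with $\cS$ $\eta$-sparse is bounded on $L^p$ with norm depending only on $p$, $\eta$, $d$, and so is the sum over the $3^d$ grids. So $T_{>1}$ admits no pointwise domination of the form \eqref{4.2}, and it is not a ``standard sparse-dominatable averaging operator''. The averages need not decay, and no stopping rule (such as adding ``or the scale drops below $1$'') can rescue it. The information was lost at the kernel step: $|z|^{-d}\1_{\{|z|>1\}}$ is not integrable, while $G_{\al}$ has integral $1$.

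Your second option is the correct fix and should be the proof from the start. Lemma \ref{lem4.1}, \eqref{4.1} and the boundedness of $t^{d+1}e^{-t/2}$ give $G_{\al,\lm}(z)\lesssim\lm^{-\al}|z|^{-d}\min((\lm|z|)^{\al},(\lm|z|)^{-1})$. The right-hand side is radially decreasing and doubling uniformly in $\lm$. So the three-lattice lemma yields $J_{\al,\lm}f\lesssim\sum_{\tau}\sum_{Q\in\cD^\tau}a_Q\langle f\rangle_Q\1_Q$ with $a_Q=\lm^{-\al}\min((\lm\ell_Q)^{\al},(\lm\ell_Q)^{-1})$. Then $\sum_{Q\subseteq S,\,Q\ni x}a_Q\lesssim\lm^{-\al}\min((\lm\ell_S)^{\al},1)$ uniformly in $\lm$. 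Your stopping time plus the ancestor chain then gives exactly \eqref{4.2} with $\lm$-independent constants, and no rescaling is needed. It gives nothing better than \eqref{4.2}, contrary to your remark: for $\lm\ell_S>1$ the scales $\le 1/\lm$ inside $S$ already contribute about $\lm^{-\al}$.

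Three smaller corrections:
\begin{itemize}
\item Your displayed kernel bound written as a minimum is false near the origin. It is at most $\lm^{d-\al}$ there, while $G_{\al,\lm}$ blows up, so it must be stated piecewise.
\item The stopping argument needs nested grids, namely the standard $2^{-k}(m+(-1)^k\tau+[0,1)^d)$, for which the three-lattice lemma holds.
\item When some $\tau_i=0$, a compactly supported $f$ need not lie in a single cube of $\cD^\tau$. Split it into at most $2^d$ pieces that do before localizing.
\end{itemize}
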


\subsection{
Proof of Theorems \ref{thm4.1} and \ref{thm4.2}, for preparation
}\label{ssec4.2}
With the motivation to \eqref{4.2}, 
we set 
\[
K(S)
:=
\frac{\min((\lm\ell_{S})^{\al},\,1)}{\lm^{\al}|S|^{n-1}},
\qquad 0<\al<(n-1)d,
\quad 0<\lm<\8,
\]
and let, 
\[
A_0(\lm)
:=
\max\lt(\begin{array}{l}
\ds
\sup_{S\in\cS:\,
\ell_{S}\le 1/\lm}
\ell_{S}^{\al}\prod_{i=1}^n
\lt(\frac{\sg_i^{\tht}(S)}{|S|}\rt)^{\frac1{\tht p_i'}},\\
\ds
\lm^{-\al}
\sup_{S\in\cS:\,
\ell_{S}>1/\lm}
\prod_{i=1}^n
\lt(\frac{\sg_i^{\tht}(S)}{|S|}\rt)^{\frac1{\tht p_i'}}
\end{array}\rt).
\]
Here, we treat only the case 
Theorem \ref{thm4.1} part b) 
with $p=q$.
The other cases are similar.

\begin{prp}\label{prp4.1}
Assume that
\begin{equation}\label{4.3}
\max\lt(\begin{array}{l}
\ds
\sup_{S\in\cS:\,
\ell_{S}\le 1}
\ell_{S}^{\al}\prod_{i=1}^n
\lt(\frac{\sg_i^{\tht}(S)}{|S|}\rt)^{\frac1{\tht p_i'}},\\
\ds
\sup_{S\in\cS:\,
\ell_{S}>1}
\prod_{i=1}^n
\lt(\frac{\sg_i^{\tht}(S)}{|S|}\rt)^{\frac1{\tht p_i'}}
\end{array}\rt)
=C_0<\8
\end{equation}
and that
\begin{equation}\label{4.4}
\lim_{\lm\to\8}
\sup_{S\in\cS:\,
\ell_{S}\le 1/\lm}
\ell_{S}^{\al}\prod_{i=1}^n
\lt(\frac{\sg_i^{\tht}(S)}{|S|}\rt)^{\frac1{\tht p_i'}}
=0.
\end{equation}
Then, for any $\eps>0$, 
there exists $\lm_0>0$ such that 
\begin{equation}\label{4.5}
A_0(\lm_0)<\eps.
\end{equation}
\end{prp}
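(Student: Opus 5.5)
The plan is to control the two pieces of $A_0(\lm)$ separately. On small cubes, \eqref{4.4} is used directly. On large cubes, the factor $\lm^{-\al}$ is played against the a priori bound \eqref{4.3}, with an intermediate scale $\mu$ inserted so that the cubes of intermediate size are also handled by \eqref{4.4}.

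Write
\[
\Phi(S):=\prod_{i=1}^n\lt(\frac{\sg_i^{\tht}(S)}{|S|}\rt)^{\frac1{\tht p_i'}}
\quad\text{and}\quad
\delta(\mu):=\sup_{S\in\cS:\,\ell_S\le 1/\mu}\ell_S^{\al}\Phi(S).
\]
Since the set of admissible cubes shrinks as $\mu$ grows, $\delta$ is nonincreasing in $\mu$, and $\delta(\mu)\to0$ as $\mu\to\8$ by \eqref{4.4}. Given $\eps>0$, first fix $\mu\ge1$ with $\delta(\mu)<\eps$. Then take $\lm_0\ge\mu$ so large that
\[
C_0\lm_0^{-\al}<\eps
\quad\text{and}\quad
C_0(\mu/\lm_0)^{\al}<\eps .
\]

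The first entry of $A_0(\lm_0)$ equals $\delta(\lm_0)$. Because $\lm_0\ge\mu$ and $\delta$ is nonincreasing, $\delta(\lm_0)\le\delta(\mu)<\eps$.

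For the second entry, $\lm_0^{-\al}\sup_{\ell_S>1/\lm_0}\Phi(S)$, I split the cubes with $\ell_S>1/\lm_0$ into three ranges.

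\textbf{Case $\ell_S>1$.} Here \eqref{4.3} gives $\Phi(S)\le C_0$. Hence $\lm_0^{-\al}\Phi(S)\le C_0\lm_0^{-\al}<\eps$.

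\textbf{Case $1/\mu<\ell_S\le1$.} Here \eqref{4.3} gives $\Phi(S)\le C_0\ell_S^{-\al}<C_0\mu^{\al}$. Hence $\lm_0^{-\al}\Phi(S)< C_0(\mu/\lm_0)^{\al}<\eps$.

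\textbf{Case $1/\lm_0<\ell_S\le1/\mu$.} Here the definition of $\delta$ gives $\ell_S^{\al}\Phi(S)\le\delta(\mu)$. Therefore
\[
\lm_0^{-\al}\Phi(S)\le(\lm_0\ell_S)^{-\al}\delta(\mu)\le\delta(\mu)<\eps,
\]
since $\lm_0\ell_S>1$.

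Combining the three ranges with the bound on the first entry gives $A_0(\lm_0)<\eps$, which is \eqref{4.5}.

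The step that needs care is the intermediate range $1/\lm_0<\ell_S\le1$. Using only \eqref{4.3} there gives $\lm_0^{-\al}\Phi(S)\le C_0(\lm_0\ell_S)^{-\al}$, which is merely bounded by $C_0$ and not small. Inserting the fixed scale $\mu$, chosen via \eqref{4.4} before $\lm_0$, resolves this. The only genuine input is the order of choices: first $\mu$, then $\lm_0$ depending on $\mu$, $C_0$, $\al$.
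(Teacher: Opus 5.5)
Your proof is correct and is essentially the paper's argument. The paper sets $\lm_0=2^{N_0}$ and splits the cubes with $\ell_S>1/\lm_0$ into the same three ranges: $\ell_S>1$, $2^{N_1-N_0}<\ell_S\le 1$, and $2^{-N_0}<\ell_S\le 2^{N_1-N_0}$. So your intermediate scale $1/\mu$ plays the role of $2^{N_1-N_0}$. The only difference is the order of choices: the paper first fixes the ratio $\lm_0/\mu=2^{N_1}$ and then picks $N_0$ via \eqref{4.4}, whereas you fix $\mu$ first and then take $\lm_0$ large relative to $\mu$. This does not affect the argument.
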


\begin{proof}
Take an $\eps>0$. 
First we choose an integer 
$N_1\in\N$ so that 
\begin{equation}\label{4.6}
\frac{C_0}{(2^{N_1})^{\al}}<\eps.
\end{equation}
Next, thanks to \eqref{4.4}, 
we choose an integer $N_0>N_1$ 
so that 
\begin{equation}\label{4.7}
\sup_{S\in\cS:\,
\ell_{S}\le 2^{N_1-N_0}}
\ell_{S}^{\al}\prod_{i=1}^n
\lt(\frac{\sg_i^{\tht}(S)}{|S|}\rt)^{\frac1{\tht p_i'}}
<\eps.
\end{equation}

Let $\lm_0=2^{N_0}$.
By 
$1/\lm_0=2^{-N_0}<2^{N_1-N_0}$
and \eqref{4.7},
\[
\sup_{S\in\cS:\,
\ell_{S}\le 1/\lm_0}
\ell_{S}^{\al}\prod_{i=1}^n
\lt(\frac{\sg_i^{\tht}(S)}{|S|}\rt)^{\frac1{\tht p_i'}}
<\eps.
\]
By $N_0>N_1$, 
\eqref{4.3} and \eqref{4.6},
\[
\frac1{\lm_0^{\al}}
\sup_{S\in\cS:\,
\ell_{S}>1}\prod_{i=1}^n
\lt(\frac{\sg_i^{\tht}(S)}{|S|}\rt)^{\frac1{\tht p_i'}}
<\eps.
\]
By \eqref{4.7},
\begin{align*}
&\frac1{\lm_0^{\al}}
\sup\lt\{\prod_{i=1}^n
\lt(\frac{\sg_i^{\tht}(S)}{|S|}\rt)^{\frac1{\tht p_i'}}
:\,S\in\cS,\,
2^{-N_0}<\ell_{S}\le 2^{N_1-N_0}
\rt\}
\\ &=
\sup\lt\{
\frac{\ell_{S}^{\al}}{(\lm_0\ell_{S})^{\al}}
\prod_{i=1}^n
\lt(\frac{\sg_i^{\tht}(S)}{|S|}\rt)^{\frac1{\tht p_i'}}
:\,S\in\cS,\,
2^{-N_0}<\ell_{S}\le 2^{N_1-N_0}
\rt\}
\\ &\le
\sup\lt\{
\ell_{S}^{\al}\prod_{i=1}^n
\lt(\frac{\sg_i^{\tht}(S)}{|S|}\rt)^{\frac1{\tht p_i'}}
:\,S\in\cS,\,
\ell_{S}\le 2^{N_1-N_0}
\rt\}<\eps.
\end{align*}
By \eqref{4.3} and \eqref{4.6},
\begin{align*}
&\frac1{\lm_0^{\al}}
\sup\lt\{\prod_{i=1}^n
\lt(\frac{\sg_i^{\tht}(S)}{|S|}\rt)^{\frac1{\tht p_i'}}
:\,S\in\cS,\,
2^{N_1-N_0}<\ell_{S}\le 1
\rt\}
\\ &=
\sup\lt\{
\frac{\ell_{S}^{\al}}{(\lm_0\ell_{S})^{\al}}
\prod_{i=1}^n
\lt(\frac{\sg_i^{\tht}(S)}{|S|}\rt)^{\frac1{\tht p_i'}}
:\,S\in\cS,\,
2^{N_1-N_0}<\ell_{S}\le 1
\rt\}
\\ &\le
\frac1{(2^{N_1})^{\al}}
\sup\lt\{
\ell_{S}^{\al}\prod_{i=1}^n
\lt(\frac{\sg_i^{\tht}(S)}{|S|}\rt)^{\frac1{\tht p_i'}}
:\,S\in\cS,\,\ell_{S}\le 1
\rt\}<\eps.
\end{align*}
These prove \eqref{4.5}.
\end{proof}

\subsection{
Proof of Theorems \ref{thm4.1} and \ref{thm4.2}
}\label{ssec4.3}
For $0<\al<d$ and $0<\lm<\8$, 
the identity $I$ can be decomposed as
\begin{align*}
I
&=
(\lm^2I-\Dl)^{-\al/2}
\circ
(\lm^2I-\Dl)^{\al/2}
(\lm^{\al}I+(-\Dl)^{\al/2})^{-1}
\circ
(\lm^{\al}I+(-\Dl)^{\al/2})
\\ &=:
J_{\al,\lm}
\circ
T_m
\circ
(\lm^{\al}I+(-\Dl)^{\al/2}).
\end{align*}
Using Mihlin's multiplier theorem 
(see, e.g. \cite[Theorem 5.2.7]{Gr}), 
we have that $T_m$ is bounded on $L^p({\rm d}x)$.
Thus, we need only verify that, 
for any $\eps>0$, 
there exists $\lm_0>0$ 
such that the operator norm
\[
\|J_{\al,\lm_0}\|_{L^p({\rm d}x) \to L^q(v)}<\eps.
\]

For Theorem \ref{thm4.1},
this fact follows from Lemma \ref{lem4.2} and
Proposition \ref{prp4.1} by setting
$n=2$,
$\sg_1={\rm d}x$,
$\sg_2=v$,
$p_1=p$ and 
$p_2=q'$,
with appropriate modifications for the other cases.

For Theorem \ref{thm4.2},
we need the following facts, 
see Theorem \ref{thm2.2}.

Setting
\[
K(S)
:=
\frac{\min((\lm\ell_{S})^{\al},\,1)}{\lm^{\al}|S|},
\qquad 0<\al<d,
\quad 0<\lm<\8,
\]
we let, for Theorem \ref{thm4.2} part (a),
\begin{align*}
A_0
&=
\lt[
\sum_{S\in\cS}
\lt(
K(S)|S|
\lt(\frac{v(S)}{|S|}\rt)^{1/q}
\rt)^r
|E_{\cS}(S)|
\rt]^{1/r}
\\ &=
\lt[
\sum_{S\in\cS}
\lt(
K(S)|S|
\lt(\frac{v(S)}{|S|}\rt)^{
\frac1{\tht_2 p}
+
\frac{\tht_1}r
}
\rt)^r
|E_{\cS}(S)|
\rt]^{1/r}
\\ &\le
\lt[
\sum_{S\in\cS}
\lt(\frac{v(S)}{|S|}\rt)^{\tht_1}
|E_{\cS}(S)|
\rt]^{1/r}
\cdot
\sup_{S\in\cS}
K(S)|S|
\lt(\frac{v(S)}{|S|}\rt)^{\frac1{\tht_2 p}}
\\ &\lesssim
\|v\|_{L^{\tht_1}({\rm d}x)}^{\tht_1/r}
\cdot
\sup_{S\in\cS}
K(S)|S|
\lt(\frac{v(S)}{|S|}\rt)^{\frac1{\tht_2 p}}.
\end{align*}

Similarly,
for Theorem \ref{thm4.2} part (b),
\begin{align*}
A_0
&=
\lt[
\sum_{S\in\cS}
\lt(
K(S)|S|
\lt(\frac{v^{\tht}(S)}{|S|}\rt)^{\frac1{\tht q}}
\rt)^r
|E_{\cS}(S)|
\rt]^{1/r}
\\ &=
\lt[
\sum_{S\in\cS}
\lt(
K(S)|S|
\lt(\frac{v^{\tht}(S)}{|S|}\rt)^{
\frac1{\tht\tht_2 p}
+
\frac{\tht_1}{\tht r}
}
\rt)^r
|E_{\cS}(S)|
\rt]^{1/r}
\\ &\le
\lt[
\sum_{S\in\cS}
\lt(\frac{v^{\tht}(S)}{|S|}\rt)^{\tht_1/\tht}
|E_{\cS}(S)|
\rt]^{1/r}
\cdot
\sup_{S\in\cS}
K(S)|S|
\lt(\frac{v^{\tht}(S)}{|S|}\rt)^{\frac1{\tht\tht_2 p}}
\\ &\lesssim
\|v\|_{L^{\tht_1}({\rm d}x)}^{\tht_1/r}
\cdot
\sup_{S\in\cS}
K(S)|S|
\lt(\frac{v^{\tht}(S)}{|S|}\rt)^{\frac1{\tht\tht_2 p}}.
\end{align*}
Furthermore,
\[
\sup_{Q\in\cQ(\R^d):\,\ell_{Q}>1}
\lt(\frac{v(Q)}{|Q|}\rt)^{\frac1{\tht_2 p}}
\le
\|v\|_{L^{\tht_1}({\rm d}x)}^{
\frac1{\tht_2 p}
}<\8
\]
and
\[
\sup_{Q\in\cQ(\R^d):\,\ell_{Q}>1}
\lt(\frac{v^{\tht}(Q)}{|Q|}\rt)^{\frac1{\tht\tht_2 p}}
\le
\|v\|_{L^{\tht_1}({\rm d}x)}^{
\frac1{\tht_2 p}
}<\8.
\]

These complete the proof of 
Theorems \ref{thm4.1} and \ref{thm4.2}.
\qed

\end{document}